\newtheorem{theorem}{Theorem}
\newtheorem{lemma}{Lemma}
\newtheorem{prop}{Proposition}
\begin{document}

\title{
On the Lorenz '96 Model and Some Generalizations
}
\author{John Kerin\footnote{email: \texttt{jkerin21@gmail.com}} , Hans Engler\footnote{Department of Mathematics and Statistics, Georgetown University, Washington, DC 20057, USA, email: \texttt{engler@georgetown.edu}} \footnote{Corresponding author}}

\maketitle

\begin{abstract}
In 1996,  Edward Lorenz introduced  a system of ordinary differential equations that describes a single scalar quantity as it evolves on a circular array of sites, undergoing forcing, dissipation, and rotation invariant advection. Lorenz constructed the system as a test problem for numerical weather prediction. Since then, the system has also found widespread use as a test case in data assimilation. Mathematically, it belongs to a class of dynamical systems with a single bifurcation parameter (rescaled forcing) that undergoes multiple bifurcations and  exhibits chaotic behavior for large forcing. In this paper, the main characteristics of the advection term in the model are identified and used to describe and classify a number of possible generalizations of the system. A graphical method to study the bifurcation behavior of constant solutions is introduced, and it is shown how to use the rotation invariance to compute normal forms of the system analytically. 
Problems with site-dependent forcing, dissipation, or advection  are considered and basic existence and stability results are proved for these extensions. 
We address some related topics in the appendices, wherein the Lorenz '96 system in Fourier space is considered, explicit solutions for some advection-only systems are found, and it is demonstrated how to use advection-only systems to assess numerical schemes.

\end{abstract}

\section{Introduction\label{s-intro}}
\label{sec-1}

The Lorenz '96 (L96) system was introduced by Edward Lorenz in 1996 \cite{Lorenz96} as a toy model for studying predictability, especially in weather and atmospheric systems. It is most often used as a test case for new data assimilation and ensemble forecasting techniques \cite{LorenzEmanuel98}, but has also been used to study some general aspects of chaos, turbulence, and linear response theory \cite{lucarini2011,gallavotti2014}. For these applications, the system has been considered mostly far into the chaotic regime in order to replicate properties of turbulent flow. However, there is a rich bifurcation structure as the system approaches chaos. Until recent years, there was little in the literature related directly to the mathematics of the model itself \cite{VanKekem2018wave}. As recognized by Lorenz and subsequent researchers, the L96 system lends itself to a number of modifications \cite{lorenz2005,wilks2005,gallavotti2014}. The main purpose of this paper is to introduce and study some new modifications motivated by fundamental properties of the system. This will lead to broad classes of L96-like systems for which we present some analytical and numerical tools.

\subsection{The L96 System}

The model is a system of coupled ordinary differential equations, which describes the transfer of some scalar atmospheric quantity. It includes three characteristic processes of atmospheric systems: advection, dissipation, and external forcing. The model is defined as follows. Consider a circular array of $N$ sites that are labeled $i = 0, 1, 2, \dots, N-1$. The labels $i \le -1$ and $i \ge N$ are extended periodically, identifying site $0$ with site $N$, site $-1$ with site $N-1$, and so on. Associated with site $i$ at time $T$ is a quantity $X_i(T)$. These quantities are governed by the system of differential equations
\begin{equation}
\frac{d}{dT}X_i(T) = \alpha X_{i-1}(X_{i+1} - X_{i-2}) - \beta X_i + \gamma
\label{e-lorenz-0}
\end{equation}
where $\alpha, \, \beta, \, \gamma$ are fixed positive constants. In the standard interpretation of the system, the sites are imagined to be equally spaced points circling the globe latitudinally. The quantity in question is added to each site at the rate $\gamma$ per time unit, via the \textit{forcing} term. During short time intervals of length $\Delta T$, an approximate fraction $\beta \Delta T$ of the quantity present at a given site is destroyed or dissipated, via the \textit{dissipation} term, and an amount that is proportional to $X_{i+1}(T)X_{i-1}(T)\Delta T$ enters site $i$ and an amount proportional to  $X_{i-2}(T)X_{i-1}(T) \Delta T$  leaves site $i$, with common proportionality factor $\alpha$, via the \textit{advection} term.

The effective number of parameters may be reduced by rescaling in the following way. Let
\begin{equation}
x_i(T) = \lambda X_i(T), \quad t = \sigma T \, .
\label{e-rescaling}
\end{equation}
Then by choosing $\sigma = \beta, \, \lambda = \frac{\alpha}{\beta}$ and setting $F = \frac{\alpha \gamma}{\beta^2}$ we arrive at the system of equations
\begin{equation}
\frac{d}{dt}x_i(t) = x_{i-1}(t)(x_{i+1}(t) - x_{i-2}(t)) - x_i(t) + F \, .
\label{e-lorenz-1}
\end{equation}
This is the original system considered in \cite{Lorenz96}. Writing $\mathbf{x} = (x_0, \, x_1, \dots, x_{N-1})$, considered as a column vector,  and $\mathbf{e} = (1, 1, \dots, 1)$, the system may be written as
\begin{equation}
\dot{\mathbf{x}} = G_L(\mathbf{x}) - \mathbf{x}  + F \mathbf{e}
\label{e-lorenz-1v}
\end{equation}
where the dot denotes differentiation with respect to $t$ and the mapping $G_L: \mathbb{R}^N \to \mathbb{R}^N$ is defined by
\begin{equation}
G_L(\mathbf{x})_i = x_{i-1}(x_{i+1} - x_{i-2}) \, .
\label{e-lorenz-advection}
\end{equation} 

It is easy to see that constant functions $\mathbf{x}(t) = F \mathbf{e}$ always solve Eq.~(\ref{e-lorenz-1v}), and it is known that these solutions are stable for $- \frac{1}{2} < F < \frac{8}{9}$. As $F$ increases, solutions with spatial and temporal periodicity appear. This is illustrated for $F=2$ in Fig.~\ref{fig-hov-f2}. Figure~\ref{fig-hov-f2}a shows the solution for $N = 36$ at $t = 500$, starting from random initial data and resulting in a periodic spatial pattern with spatial period 9. Fig.~\ref{fig-hov-f2}b of $t \mapsto x_0(t)$ shows that the solution is in fact periodic in time with period near 4. The right panel is a Hovmoeller plot of the solution for $500 \le t \le 510$, where sites are plotted horizontally, time increases in the vertical direction, and the solution value at each site and time is shown according to the color key on the far right. The solution is interpolated with a cubic spline. The plot shows that spatial and temporal periodicity come from regularly spaced waves that move to the left (``westward'') at a speed of about 1.2 sites per time unit. 

\begin{figure}[ht]
    \centering
        \includegraphics[width=0.49\textwidth]{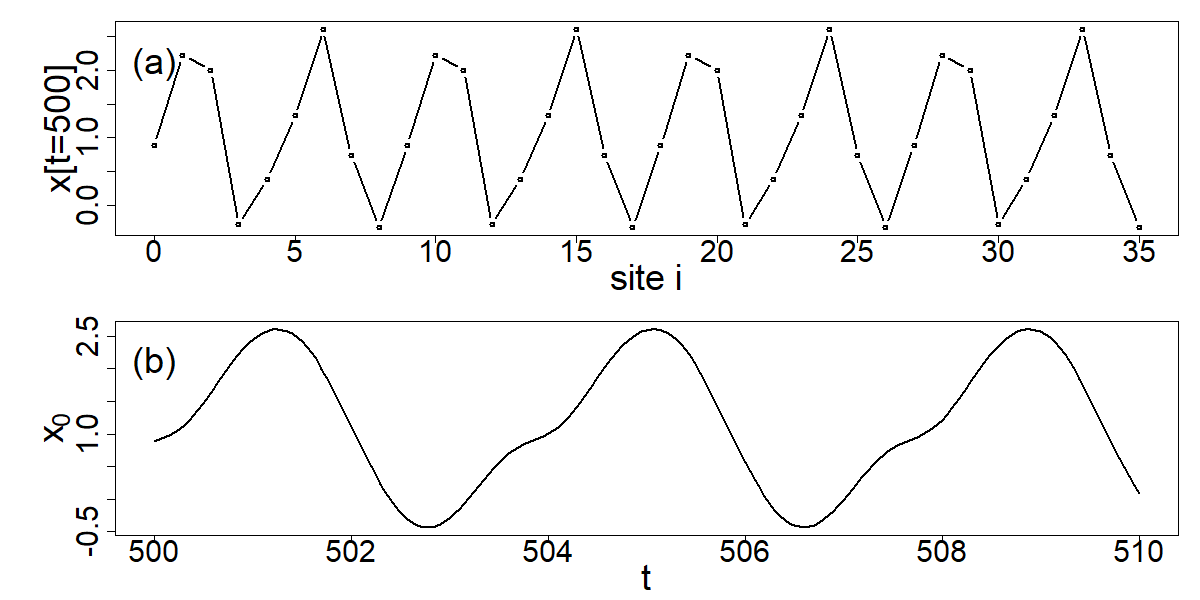}
        \includegraphics[width=0.49\textwidth]{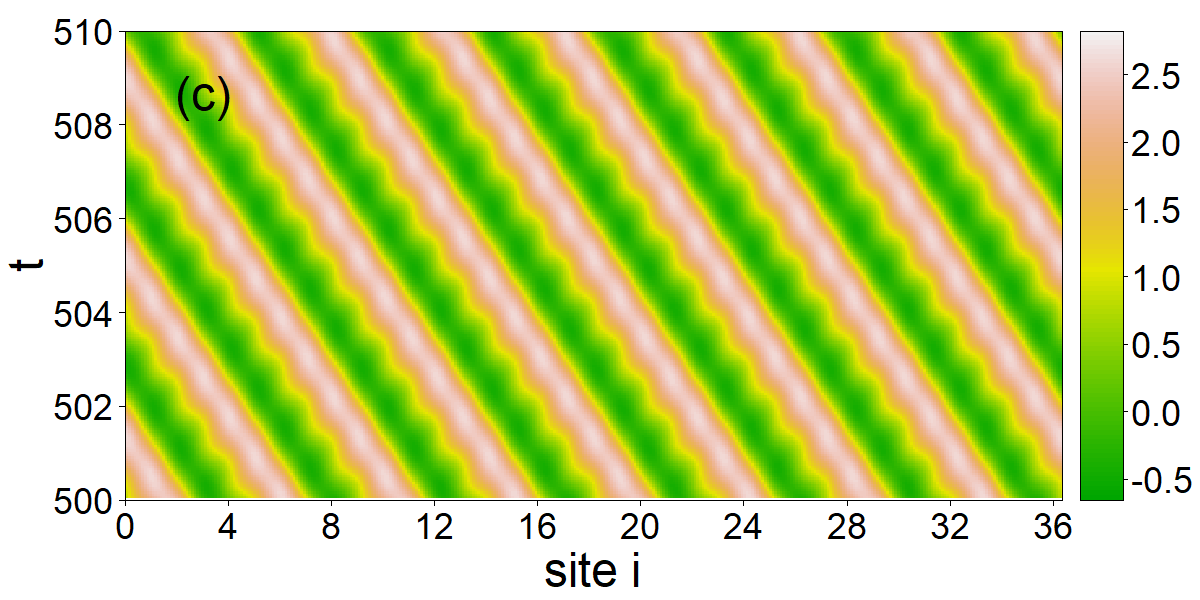}
        \caption{A solution of the L96 system with $F=2$ starting from random initial data. (a) All 36 sites at $t=500$. (b) First site for $500 \leq t \leq 510$. (c) Hovmoeller plot for $500 \leq t \leq 510$.}
        \label{fig-hov-f2}
\end{figure}

As $F$ increases further, the system becomes chaotic. The waves eventually break up and the behavior at any given site remains irregular even after a very long time. This is illustrated for $F=8$ in Fig.~\ref{fig-hov-f8}, well into the chaotic regime. Figure~\ref{fig-hov-f8}a again shows the solution for $N = 36$ at $t = 500$, starting from random initial data. Figure~\ref{fig-hov-f8}b shows the solution $x_0(t)$ for $500 \le t \le 510$. The Hovmoeller plot on the right for $500 \le t \le 510$ shows that there are still short lived spatially coherent wave patterns that move to the left at speeds between about 2 to 4 sites per time unit.

\begin{figure}[ht]
    \centering
        \includegraphics[width=0.49\textwidth]{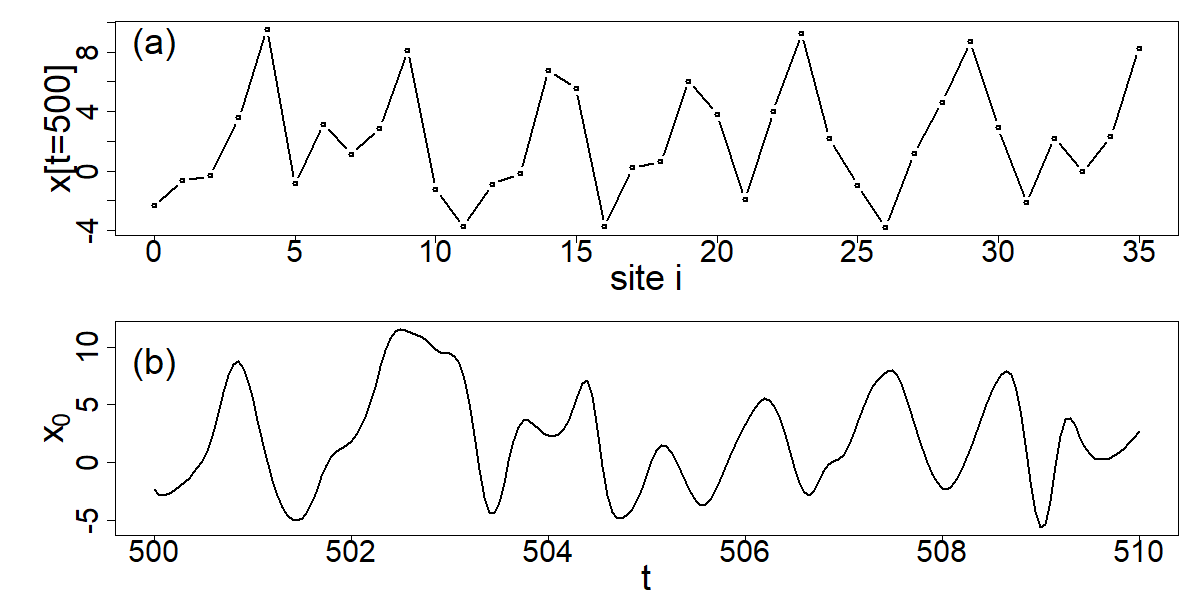}
        \includegraphics[width=0.49\textwidth]{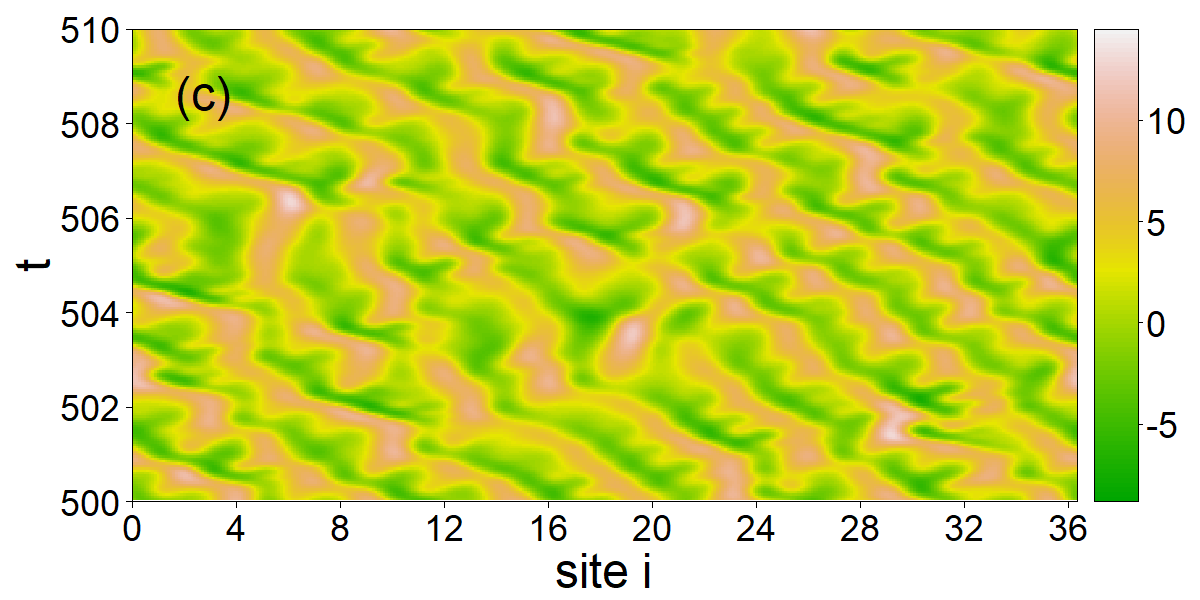}
        \caption{A solution of the L96 system with $F=8$ starting from random initial data. (a) All 36 sites at $t=500$. (b) First site for $500 \leq t \leq 510$. (c) Hovmoeller plot for $500 \leq t \leq 510$.}
        \label{fig-hov-f8}
\end{figure}

\subsection{Overview of the Paper}

In this paper, we generalize the L96 system in two ways, first by considering new advection terms, then by allowing for site-dependent parameters. We provide some broadly applicable tools to study the bifurcations and limit cycles of such systems. Some specific systems, including the original L96 system, are considered in more detail.

The paper is organized as follows. In Section~\ref{sec-2}, we discuss general classes of quadratic advection terms that share important properties with the term $G_L$ defined in Eq.~(\ref{e-lorenz-advection}). We characterize all such terms and compute their linearizations about constant solutions for later use. In Section~\ref{sec-3}, we show how to describe bifurcations from such constant solutions, and we prove that the first Hopf bifurcation for the L96 system is always supercritical, using a general approach for computing normal form coefficients. Section~\ref{sec-4} discusses multiple stable limit cycles of periodic solutions. Following the approach in \cite{VanKekem2018travelling}, we show how this can be explained by embedding the system in a suitable two parameter family of systems. As our main contribution, we  demonstrate how all normal form coefficients of the system for any number of sites may be computed exactly, which reveals the presence of a Neimark-Sacker (N-S) bifurcation. In Section~\ref{sec-5}, we turn to systems with site-dependent advection, dissipation, and forcing. We give general existence proofs for static and dynamic problems, show that for a large class of such problems the constant solution is always globally asymptotically stable for small forcing, and give some numerical results for the cases of site-dependent advection and dissipation. In Section~\ref{sec-6} we give conclusions and list some open questions.

Finally, we include a few appendices. Appendix A summarizes aspects of L96-like systems in Fourier space, which are useful for the normal form analysis. In Appendix B we compare the behavior of mid accuracy ODE solvers (e.g. Runge-Kutta) to that of high accuracy solvers with step size control by examining the computed energy of solutions of systems that have only advection. Appendix C is devoted to the special case of problems that have only advection, but no dissipation or forcing. Such systems are of interest because they have more conserved quantities than the full L96 system. For a system with symmetrized advection we find closed form solutions for $N = 4$ and $N = 6$.

\section{Advection Terms}
\label{sec-2}

In this section, $G:\mathbb{R}^N \to \mathbb{R}^N$ denotes an arbitrary continuously differentiable mapping, while $G_L:\mathbb{R}^N \to \mathbb{R}^N$ denotes the Lorenz advection term that is defined in Eq.~(\ref{e-lorenz-advection}). We identify and examine some properties of $G_L$ and identify other mappings $G$ that share these properties, described in \textbf{(i)}-\textbf{(iv)} in Section~\ref{sec-2-adv-properties} below. 

\subsection{Properties of Suitable Advection Terms}
\label{sec-2-adv-properties}

\noindent \textbf{(i)} The mapping $G_L$ is \textbf{quadratic}. A mapping $G$ has this property if it may be written as 
\begin{equation}
G(\mathbf{x}) = \tfrac12 \mathcal{B}(\mathbf{x}, \mathbf{x})
\label{e-quadratic}
\end{equation}
where $\mathcal{B}: \mathbb{R}^N \times \mathbb{R}^N \to \mathbb{R}^N$ is symmetric and bilinear. The bilinear map $\mathcal{B}$ may be recovered from $G$ by the formula
$$
\mathcal{B}(\mathbf{x}, \mathbf{y}) =  G(\mathbf{x} + \mathbf{y}) - G(\mathbf{x} - \mathbf{y}) \, .
$$
Specifically, in the L96 case the bilinear map is given by
\begin{equation}
\mathcal{B}_L (\mathbf{x}, \mathbf{y})_j = x_{j-1}(y_{j+1} - y_{j-2}) + y_{j-1}(x_{j+1} - x_{j-2}) \, .
\label{e-bilinear}
\end{equation}
Consequently, the Taylor expansion of a quadratic mapping $G$ about any $\mathbf{x}_0 \in \mathbb{R}^N$ has the form
\begin{equation}
G(\mathbf{x}_0 + \mathbf{y}) = G(\mathbf{x}_0)  +  A[\mathbf{x}_0]\mathbf{y} +  G(\mathbf{y}) 
\label{e-quadratic.1}
\end{equation}
where now $A$ is a linear map from $\mathbb{R}^N$ to the set of linear  self-maps of $\mathbb{R}^N$, defined by
\begin{equation}
A[\mathbf{x}] = \mathcal{B}(\mathbf{x}, \cdot) \, .
\label{e-linearization}
\end{equation}

\medskip
\noindent \textbf{(ii)} Further, the mapping $G_L$ is \textbf{energy-preserving}. A mapping $G$ has this property if for all $\mathbf{x}$
\begin{equation}
 \mathbf{x}^{\text{T}} G(\mathbf{x})  = 0  \, .
\label{e-energy.0}
\end{equation} 
This follows for $G_L$  by a direct calculation. We define the \textbf{energy} as $E = \mathbf{x}^{\text{T}} \mathbf{x}.$

\medskip
\noindent \textbf{(iii)} In addition, the mapping $G_L$ is \textbf{equivariant} with respect to the group  of coordinate rotations acting on $\mathbb{R}^N$. This group is generated by the rotation $\rho: \mathbb{R}^N \to \mathbb{R}^N$, i.e.  
\begin{equation}
    \rho(\mathbf{x}) = (x_1, \, x_2, \dots, \, x_{N-1}, \, x_0) \, .
    \label{e-rotation.0}
\end{equation}
A mapping $G$ from a suitable domain to itself is said to be equivariant with respect to a group $\mathcal{H}$ of transformations of its domain if for all group elements $h \in \mathcal{H}$  and all $\mathbf{x}$ in the domain
\begin{equation}
G(h (\mathbf{x})) = h \left(G(\mathbf{x})\right) \, .
\label{e-equivariant}
\end{equation}
We call the mapping $G_L$ $\langle \rho \rangle$\textbf{-equivariant}. Clearly, a quadratic mapping $G$ is $\langle \rho \rangle$-equivariant if and only if the associated bilinear map $\mathcal{B}$ from Eq.~(\ref{e-quadratic}) satisfies $\rho \mathcal{B}(\mathbf{x}, \mathbf{y}) = \mathcal{B}(\rho \mathbf{x}, \rho \mathbf{y})$ for all $\mathbf{x}, \,  \mathbf{y}$.
From now on, we use \textit{equivariant} to mean \textit{$\langle \rho 
\rangle$-equivariant}.

An equivariant mapping is defined by its behavior on a single element of each orbit of the group. Here, such a mapping may be described by its behavior at a single component. In particular, if $G$ is \textit{quadratic},
then we may write 
\begin{equation}
    G(\mathbf{x})_0 = \tfrac{1}{2} \mathcal{B}(\mathbf{x},\mathbf{x})_0 = \tfrac{1}{2} \sum\limits_{i,j} Q_{ij} x_i x_j = \tfrac{1}{2} \mathbf{x}^\text{T} Q \mathbf{x}, \quad \mathcal{B}(\mathbf{x}, \mathbf{y})_0 = \mathbf{x}^\text{T} Q \mathbf{y}
\label{e-define-Q}
\end{equation}
where $Q$ is a real symmetric matrix since $\mathcal{B}$ is symmetric. Identifying $\rho$ with its matrix representation, the equivariance of $G$ gives
\begin{equation}
    G(\mathbf{x})_m = \tfrac{1}{2} (\rho^m \mathbf{x})^\text{T} Q \rho^m \mathbf{x} = \tfrac{1}{2} \mathbf{x}^\text{T} (\rho^\text{T})^m Q \rho^m \mathbf{x} .
\end{equation}
When applied on the left of any matrix $M$, $\rho$, resp. $\rho^\text{T}$, shifts the rows of $M$ up, resp. down, by one row, circulantly. When applied on the right, it shifts the columns of $M$ right, resp. left, by one column, circulantly. Therefore $(\rho^\text{T})^m Q \rho^m$ is $Q$ shifted down and to the right, circulantly, by $m$. Since $Q$ is symmetric, $(\rho^\text{T})^m Q \rho^m$ is also symmetric.

If $G$ is \textit{equivariant} and \textit{energy-preserving}, then it must vanish on all constant input vectors $\lambda \mathbf{e}$, since $G(\lambda \mathbf{e})$ must be a multiple of $\mathbf{e}$ and $\mathbf{e}^{\text{T}} G(\lambda \mathbf{e}) = 0$. Additionally, its linearization at a constant input vector $\lambda \mathbf{e}$ is also equivariant, meaning it may be described by a \textit{circulant matrix}.

\medskip
\noindent \textbf{(iv)} Finally, each component of $G_L$ depends only on a few neighboring components  of $\mathbf{x}$. We say that the mapping $G$ is \textbf{$k$-localized} if for each $i$, $G(\mathbf{x})_i$ depends only on $x_{i-k}, \dots, x_{i+k}$, where indices are taken modulo $N$. For example, $G_L$ is 2-localized. An \textit{equivariant, quadratic} mapping $G$ may be described by its behavior at the 0-th component via Eq.~(\ref{e-define-Q}). If the mapping is \textit{$k$-localized}, then shifting $Q$ by $k$ columns to the right and $k$ rows down, circulantly, results in a matrix that is zero outside its top left $2k+1 \times 2k+1$ block.

For later use, we define the coordinate reflection map $\tau$ defined by
\begin{equation}
\tau (\mathbf{x}) = ( x_{N-1},x_{N-2}, \dots, x_1, x_0) .
\label{e-reflection.0}
\end{equation}
Then if a mapping $G$ is quadratic, energy-preserving, equivariant, or $k$-localized, then so is the mapping $\tilde G$, defined by $\tilde G(\mathbf{x}) = \tau \circ G(\tau (\mathbf{x}))$. When $G$ is replaced with $\tilde G$,  the direction of the advection term is simply reversed everywhere.

\subsection{Classification of Maps}
\label{sec-2-g-maps}

We now give a complete classification of all \textit{quadratic, energy-preserving, equivariant, 3-localized} mappings $G$.

Consider first the set of all quadratic equivariant maps $G$, which forms a vector space of dimension $\frac{N(N+1)}{2}$, as Eq.~(\ref{e-define-Q}) shows. If $G$ is also energy-preserving, then 
\begin{align*}
    0&=\sum\limits_k x_k \mathbf{x}^\text{T} (\rho^\text{T})^k Q \rho^k \mathbf{x}\\
    &=\sum\limits_{i,j,k} x_k x_i \left[ (\rho^\text{T})^k Q \rho^k \right]_{ij} x_j\\
    &=\sum\limits_{i,j,k} Q_{(i-k)(j-k)} x_i x_j x_k
\end{align*}
where $i,j,k$ run from 0 to $N-1$ and indices are taken modulo $N$. Since $Q$ is symmetric, this condition is equivalent to
\begin{equation}
    Q_{(i-k)(j-k)} + Q_{(i-j)(k-j)} + Q_{(j-i)(k-i)} = 0 \quad \forall \, i,j,k \in \lbrace 0,\hdots,N-1 \rbrace.
    \label{e-Q-sum-zero}
\end{equation}
Taking $k=0$ corresponds to $G(\mathbf{x})_0$; that is, $Q_{ij}$ is the coefficient of $x_i x_j$, giving
\begin{equation}
    Q_{ij} + Q_{(i-j)(-j)} + Q_{(j-i)(-i)} = 0.
\label{e-Q-energy}
\end{equation}

Next consider quadratic, equivariant maps $G$ that are also $k$-localized. Thus each component of $G(\mathbf{x})$ depends on up to $2k+1$ sites. These sites should all be different, therefore we require $N \ge 2k+2$.  For $k = 1, \, N \ge 4$, this is a 6-dimensional space, for $k=2, \, N \ge 6$ it is a 15-dimensional space, and for $k=3, \, N \ge 8$ it is a 28-dimensional space. If now $G$ is also energy-preserving, then Eq.~(\ref{e-Q-energy}) restricts the set of possible mappings further. The following result allows one to list all such maps for $k \le 3$. Clearly, any equivariant, quadratic, energy-preserving map must have at least two quadratic terms in each component. The mappings that are listed below are among the simplest possible in that they each use exactly two terms.  

\begin{theorem} 
\label{thm-all.quadratic}
Consider the set of all quadratic, energy-preserving, equivariant, $k$-localized maps $G: \mathbb{R}^N \to \mathbb{R}^N$. \\
a) If $k = 1$ and $N \ge 4$, this is a two-dimensional space, $\mathscr{G}_1$. A basis is given by $G_1, \tilde G_1$, defined by
\begin{equation}
G_1(\mathbf{x})_0 = x_1^2 - x_0x_{-1}, \quad \tilde G_1 = \tau \circ G_1 \circ \tau, \quad \tilde G_1(\mathbf{x})_0 = x_{-1}^2 - x_0x_1 .
\label{e-form-k1}
\end{equation}
b) If $k = 2$ and $N \ge 6$, this is a six-dimensional space, $\mathscr{G}_2$. A basis is given by $G_1, \, \tilde G_1$, and $G_2, \, \tilde G_2, \, G_3, \, \tilde G_3$, defined by
\begin{eqnarray}
\begin{aligned}
G_2(\mathbf{x})_0 &= x_2^2 - x_0x_{-2}, \quad \tilde G_2 =  \tau \circ G_2 \circ \tau \\
G_3(\mathbf{x})_0 &= x_{-1}x_1 - x_{-2}x_{-1}, \quad \tilde G_3 = \tau \circ  G_3 \circ \tau .
\end{aligned}
\label{e-form-k2}
\end{eqnarray}
c) If $k = 3$ and $N \ge 8$, this is a 12-dimensional space, $\mathscr{G}_3$. A basis is given by $G_1, \, \tilde G_1, \dots \, \tilde G_3$, and $G_4, \, \tilde G_4, \dots, \, \tilde G_6$, defined by
\begin{eqnarray}
\begin{aligned}
G_4(\mathbf{x})_0 &= x_3^2 - x_0x_{-3}, \quad \tilde G_4 = \tau \circ G_4 \circ \tau  \\
G_5(\mathbf{x})_0 &= x_2x_3 - x_{-2}x_{1}, \quad \tilde G_5 = \tau \circ G_5 \circ \tau \\
G_6(\mathbf{x})_0 &= x_1x_3 - x_{-1}x_{2}, \quad \tilde G_6 = \tau \circ G_6 \circ \tau .
\end{aligned}
\label{e-form-k3}
\end{eqnarray}

\end{theorem}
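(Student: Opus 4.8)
The plan is to convert the four properties into a single finite linear system on a symmetric matrix and solve it.

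\textbf{Reduction.} By Eq.~(\ref{e-define-Q}), an equivariant quadratic map $G$ is determined linearly by a real symmetric matrix $Q$ via $G(\mathbf x)_0=\tfrac12\mathbf x^{\mathrm T}Q\mathbf x$, with $G(\mathbf x)_m=\tfrac12\sum_{i,j}Q_{ij}x_{i+m}x_{j+m}$. The $k$-localization hypothesis is equivalent to $Q_{ij}=0$ whenever $i$ or $j$ lies outside $\{-k,\dots,k\}\bmod N$; since $N\ge 2k+2$ these $2k+1$ residues are distinct, so $Q$ is genuinely supported in a $(2k+1)\times(2k+1)$ symmetric block, which is where the unconstrained dimensions $6,15,28$ recorded before the theorem come from. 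Energy preservation is precisely the identity (\ref{e-Q-energy}), required for all $i,j\in\{0,\dots,N-1\}$. Thus the theorem reduces to: (a) the listed maps really have all four properties; (b) they are linearly independent; (c) the solution space of (\ref{e-Q-energy}) on the block has dimension $2$, $6$, $12$ --- equivalently, (\ref{e-Q-energy}) imposes exactly $4$, $9$, $16$ independent conditions.

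\textbf{Verification and independence (the easy half).} Each $G_r(\mathbf x)_0$ in (\ref{e-form-k1})--(\ref{e-form-k3}) is a polynomial in $x_{-k},\dots,x_k$ alone, so $G_r$ is quadratic and $k$-localized, and equivariant by construction; each $\tilde G_r=\tau\circ G_r\circ\tau$ then inherits all four properties by the remark at the end of Section~\ref{sec-2-adv-properties}. Energy preservation of each generator is a one-line telescoping: in every case $G_r(\mathbf x)_0=c\,(x_{a_1}x_{b_1}-x_{a_2}x_{b_2})$ with the offset triples $\{0,a_1,b_1\}$ and $\{0,a_2,b_2\}$ equal up to a cyclic shift, so in $\mathbf x^{\mathrm T}G_r(\mathbf x)=\tfrac12\sum_m\sum_{i,j}(Q_r)_{ij}x_m x_{m+i}x_{m+j}$ the two cubic families agree after reindexing the sum over $m$ and cancel. (For $G_3=G_L$ this is the familiar energy conservation of L96.) For independence: each of $G_1,\tilde G_1,G_2,\tilde G_2,G_4,\tilde G_4$ carries a square term $x_{\pm1}^2,x_{\pm2}^2,x_{\pm3}^2$ found in no other generator, and each of $G_3,\tilde G_3,G_5,\tilde G_5,G_6,\tilde G_6$ carries a mixed monomial $x_ax_b$ ($a\ne b$) found in no other generator, so (for $N\ge 8$, where all these monomials are distinct) no nontrivial combination vanishes.

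\textbf{Spanning (the main step).} It remains to show there is nothing else, i.e.\ to solve (\ref{e-Q-energy}). For a fixed pair $(i,j)$ all three terms $Q_{ij}$, $Q_{(i-j)(-j)}$, $Q_{(j-i)(-i)}$ vanish unless one of the index pairs $(i,j)$, $(i-j,-j)$, $(j-i,-i)$ lies in the window $\times$ window; since the window is finite and these maps are bijections of $(\mathbb Z/N)^2$, only finitely many $(i,j)$ give a nontrivial condition, and one checks the number of independent conditions is the same for every $N\ge 2k+2$. Two reductions cut the work down: $i=j=0$ forces $Q_{00}=0$, and $i=j$ forces $Q_{ii}=-2Q_{0,-i}$, so every diagonal entry is slaved to the first row; and the involution $G\mapsto\tau\circ G\circ\tau$ respects the solution space and splits the system into a $\tau$-even and a $\tau$-odd piece, spanned respectively by the combinations $G_r+\tilde G_r$ and $G_r-\tilde G_r$, which roughly halves the bookkeeping. (It is also natural to group the conditions (\ref{e-Q-energy}) by the shift-orbit of the cubic monomial they control.) Carrying this out yields exactly the claimed dimensions, with the generators' $Q$-matrices forming a basis. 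The principal obstacle is just the size of the case $k=3$: a linear system in $28$ unknowns is large to run entirely by hand, and because $2k$ and $-k$ can coincide with window indices once $N$ is as small as $2k+2$ (for instance $2k=6\equiv-2\pmod 8$), one must verify that no index arithmetic wraps in a way that changes the count --- most safely via a short case check at small $N$ or a symbolic computation.
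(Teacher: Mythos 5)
Your proposal follows essentially the same route as the paper's proof: both reduce the problem to the homogeneous linear system (\ref{e-Q-energy}) on the symmetric block of $Q$, count unknowns against independent homogeneous conditions, and exhibit the listed generators as independent solutions, and like the paper you leave the final bookkeeping (especially for $k=3$) as a routine computation. Your added remarks --- the $\tau$-splitting, the telescoping verification of energy preservation, and the explicit caution about index wrap-around when $N=2k+2$ --- are consistent with, and if anything slightly more careful than, the paper's terse argument.
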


\begin{proof}
Start by noting that Eq.~(\ref{e-Q-energy}) implies $Q_{00} = 0$ and $Q_{rs} = 0$ if $|s - r| > k$ and $N \ge 2k+2$. \\
For $k=1$ this leaves the four possible nonzero coefficients $Q_{-1,-1}, Q_{-1,0} = Q_{0,-1}, Q_{0,1} = Q_{1,0}, \, Q_{1,1}$ which must satisfy the equations $Q_{1,1} + 2Q_{-1,0} = 0$ and $Q_{-1,-1} + 2Q_{0,1} = 0$. The maps $G_1$ and $\tilde G_1$ correspond to two independent solutions of this homogeneous system.\\
For $k = 2$ the same reasoning results in 11 nonzero coefficients and three homogeneous equations in addition to the two equations for $k=1$. The maps $G_1, \, \dots, \, \tilde G_3$ correspond to six independent solutions of this homogeneous system.\\
The same technique may be applied to the case $k=3$.
\end{proof}

Henceforth, we call any \textit{quadratic, energy-preserving, equivariant} map from $\mathbb{R}^N$ to itself a \textbf{$\mathscr{G}$-map}. A map in $\mathscr{G}_k$ as described in Theorem~\ref{thm-all.quadratic} is a \textbf{$k$-localized $\mathscr{G}$-map}. In the following sections, when it is clear that we are talking about a $\mathscr{G}$-map we sometimes use ``advection term'' interchangeably.

The map $G_3$ is the L96 term $G_L$. Also, $G_2$ is essentially the same as $G_1$ with interacting sites always at a distance of $k=2$ instead of $k=1$, and $G_4$ is essentially the same as $G_1$ with interaction at a distance of $k=3$. Some additional $\mathscr{G}$-maps that have only two quadratic terms may be generated from the list above. The main examples are 
\begin{eqnarray}
G_7 = G_3 - \tilde G_3, &\quad& G_7(\mathbf{x})_0 = x_1 x_2 - x_{-1}x_{-2}\\
G_8 = G_5 - \tilde G_6, &\quad& G_8(\mathbf{x})_0 = x_2 x_3 - x_{-1}x_{-3}, \quad 
\tilde G_8 = \tau \circ G_8 \circ \tau .
\label{e-more-k3}
\end{eqnarray}
Note that $\tau \circ G_7 \circ \tau = - G_7$ and therefore there is no reason to consider $\tilde G_7$ separately. 

There is no 1-localized $\mathcal{G}$-map with the Liouville property $\nabla_{\mathbf{x}} \cdot G(\mathbf{x}) = 0$ as the theorem shows. The 2-localized $\mathcal{G}$-maps with this property are precisely the linear combinations of $G_3$ and $\tilde G_3$. Equations with nonlinearity from this set, called the \textbf{Orszag-McLaughlin} family after \cite{orszag1980} where a special case was introduced, were studied in \cite{abramov2008}.

\section{Linearization and Eigenvalue Curves}
\label{sec-3}

Recall the definition of the left rotation $\rho$ in Eq.~(\ref{e-rotation.0}), identified with its matrix representation. This is a unitary circulant matrix, i.e. $\rho^{-1} = \rho^{\text{T}}$. 
\begin{equation}
    \rho \equiv
    \begin{pmatrix}
    0 & 1 & 0 & \hdots & 0\\
    0 & 0 & 1 & \hdots & 0\\
    \vdots & \vdots & \vdots & \ddots & \vdots\\
    0 & 0 & 0 & \hdots & 1\\
    1 & 0 & 0 & \hdots & 0
    \end{pmatrix}
\end{equation}
Matrix rows and columns are labeled from 0 to $N-1$. The eigenvalues of $\rho$ are $\omega_N^j,  \, j = 0, \dots, N-1$, where $\omega_N = e^{2 \pi i/N}$ is a primitive $N$-th unit root.

Throughout the section, advection terms $G$ will be assumed to be $\mathscr{G}$-maps. Let $F \in \mathbb{R}$. We are interested in the general system
\begin{equation}
\dot{\mathbf{x}} = G(\mathbf{x}) - \mathbf{x}  + F \mathbf{e}.
\label{e-lorenz-general}
\end{equation}
Applying Eq.~(\ref{e-quadratic.1}), it follows that the linearization of this system~(\ref{e-lorenz-general}) about the constant solution $\mathbf{x}_F = F \mathbf{e}$ is the system
\begin{equation}
\dot{\mathbf{y}} = (FA - I)  \mathbf{y}
\label{e-lorenz-2lin}    
\end{equation}
where $A$ is the linearization of $G$ about the constant vector $\mathbf{e}$. The elements of $A$ are $A_{ij} = 2 \sum_{k=0}^{N-1} Q_{(j-i)(k-i)}$, with $Q$ defined by Eq.~(\ref{e-define-Q}).

Since $G$ is equivariant, $A$ is circulant. Its eigenvalues are therefore of the form $\lambda_j = p_A(\omega_N^j), \, j = 0, \dots, N-1$, where $p_A(z) = \sum_{j=0}^{N-1} A_{0j}z^j = \sum_{j,k=0}^{N-1} Q_{jk} z^j$ is a polynomial with coefficients given in the first row of $A$. Since all arguments of $p_A$ are unit roots, we may replace positive powers $z^j$ with negative powers $z^{j-N}$ if $j \ge \frac{N}{2}$, resulting in a \textbf{Laurent polynomial}. All eigenvalues of $A$ lie on the image of the complex unit circle $\mathbb{S}^1$ under the map $z \mapsto p_A(z)$. We therefore also call these \textbf{eigenvalue curves}.

\subsection{Characterizing Eigenvalue Curves}
\label{sec-3-eig-curves}

We now give a table of these polynomials $p_A$ for the maps $G_1, \dots, G_8$ identified in Theorem~\ref{thm-all.quadratic} and also describe the shape of the image of the unit circle under $p_A$. These images are plotted in Fig.~\ref{f-curves-0} for some of the Laurent polynomials. We also list the type of bifurcations corresponding to each eigenvalue curve.

\begin{table}[ht]
\begin{center}
\addvbuffer[12pt]{\begin{tabular}{l|l|l|l|l}
    $\mathscr{G}$-map & Laurent polynomial $p_A(z)$ & Shape of $p_A(\mathbb{S}^1)$ & $F>0$ & $F<0$ \\\hline
    $G_1$ & $ - z^{-1} - 1 + 2z $ & ellipse & none & pitchfork\\
    $G_2$ & $ -z^{-2} - 1 + 2z^2 $ & ellipse & none & pitchfork\\
    $G_3$ & $ - z^{-2} + z$ & trefoil & Hopf & pitchfork\\
    $G_4$ & $ - z^{-3} -1 + 2 z^3$ & ellipse & none & pitchfork\\
    $G_5$ & $-z^{-2} - z + z^2 + z^3$ & butterfly & Hopf & Hopf\\
    $ G_6$ & $-z^{-1} + z - z^2 + z^3$ &  kidney & Hopf & pitchfork\\
    $G_7$ & $-z^{-2} - z^{-1} + z + z^2$ &  vertical line & none & none\\
    $G_8$ & $-z^{-3} - z^{-1} + z^2 + z^3$ & bee & pitchfork & pitchfork
\end{tabular}}
\label{table:laurent-0}
\caption{Description of the eigenvalue curves of the eight simplest 3-localized $\mathscr{G}$-maps identified in Section~\ref{sec-2-g-maps}. The two rightmost columns give the types of the first expected bifurcation for $F>0$ and $F<0$ as the magnitude of $F$ increases.}
\end{center}
\end{table}

Note that all these Laurent polynomials satisfy $p_A(1) = 0$, i.e. the sum of coefficients vanishes. This is a consequence of linearizing energy-preserving quadratic maps about the vector $\mathbf{e}$, which follows from Eq.~(\ref{e-Q-sum-zero}).

\begin{figure}[ht]
 \centering
  \includegraphics[width=.95\textwidth]{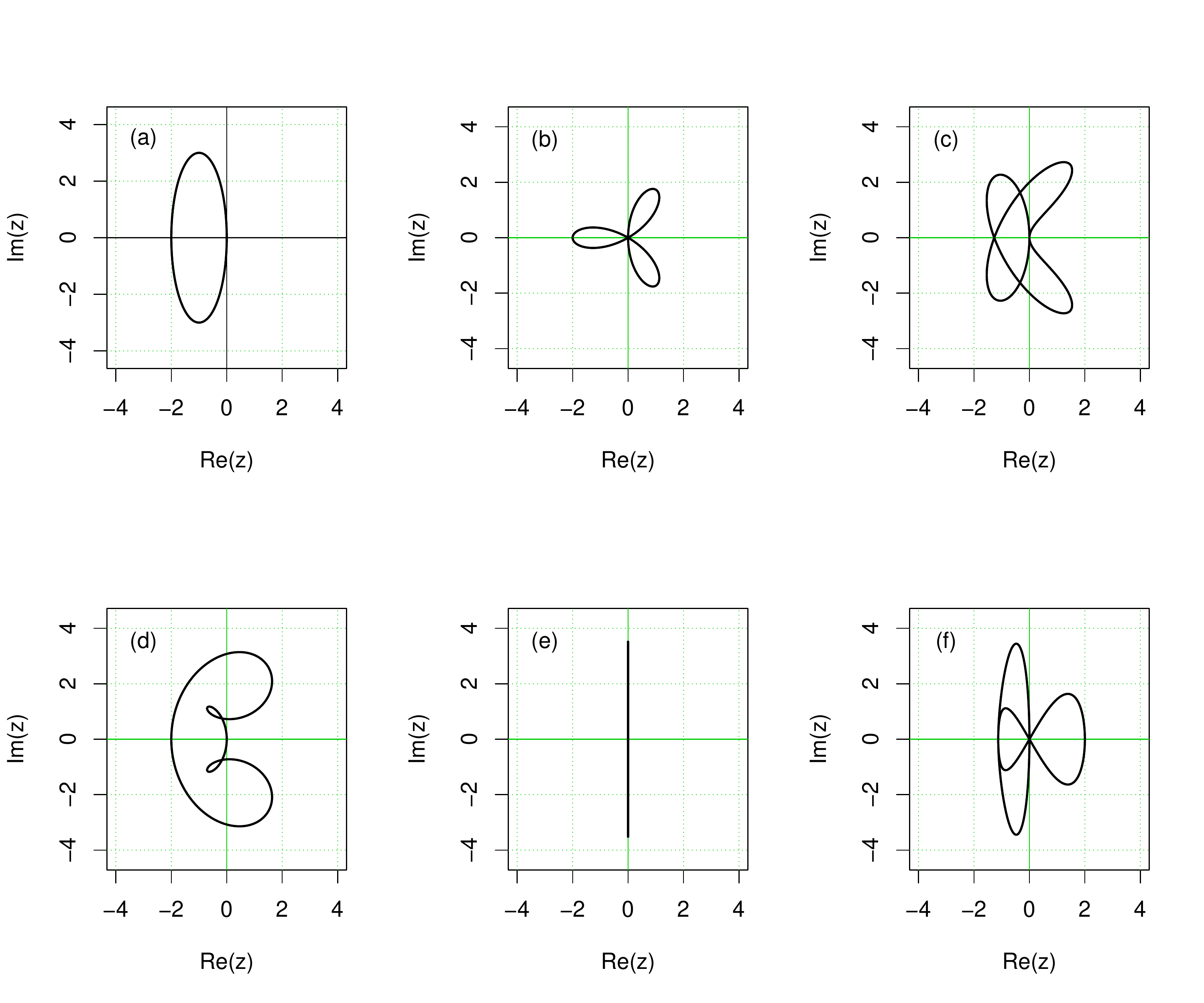}
  \caption{Images of the complex unit circle under the Laurent polynomials given in Table~1. (a): Ellipse ($G_1, \, G_2, \, G_4$). (b): Trefoil ($G_3$, L96 system). (c): Butterfly ($G_5$). (d): Kidney ($G_6$). (e): Vertical line ($G_7$). (f): Bee ($G_8$).}
  \label{f-curves-0}
\end{figure}

If a map $G_j$ is replaced by $\tilde G_j$, then $p_A(z)$ is replaced by $p_A(z^{-1})$. The image of the unit circle does  not change, but the curve is traversed in the opposite direction. If $G$ is replaced with $-G$, the image of the curve is reflected about the imaginary axis. The curves for $G_2$ and $G_4$ are the same as for $G_1$ but are traversed twice or three times as $z$ traverses $\mathbb{S}^1$.

For a concrete system, the eigenvalues of $A$ are a discrete set of points. For the case of the L96 system with $G = G_L = G_3$ and $N = 36$, this is illustrated in the left panel of 
Fig.~\ref{f-curves-1}, which shows the eigenvalues of the matrix $A$. The eigenvalues are the points $\lambda_j = \omega_{36}^j - \omega_{36}^{-2j}, \, \omega_{36} = e^{\pi i/18}$. We note that in this case $\lambda_0 = \lambda_{12} = \lambda_{24} = 0$ and that the single eigenvalue with smallest real part is $\lambda_{18} = -2$. The pair of eigenvalues with largest real part is $(\lambda_8, \, \lambda_{28})$ with $\Re \lambda_8 = \Re \lambda_{28} = \cos \frac{\pi}{9} + \sin \frac{\pi}{18} \approx 1.11334$. The pair
$(\lambda_7, \, \lambda_{29})$ with $\Re \lambda_7 = \Re \lambda_{29} = \cos \frac{2\pi}{9} + \sin \frac{\pi}{9} \approx 1.10806$ is close behind. There are four eigenvalues with real part 1, namely $\lambda_6 = 1 + \sqrt{3} \cdot i, \, \lambda_9 = 1 + i$, and their conjugates $\lambda_{27}$ and $\lambda_{30}$.

\begin{figure}[ht]
 \centering
  \includegraphics[width=.95\textwidth]{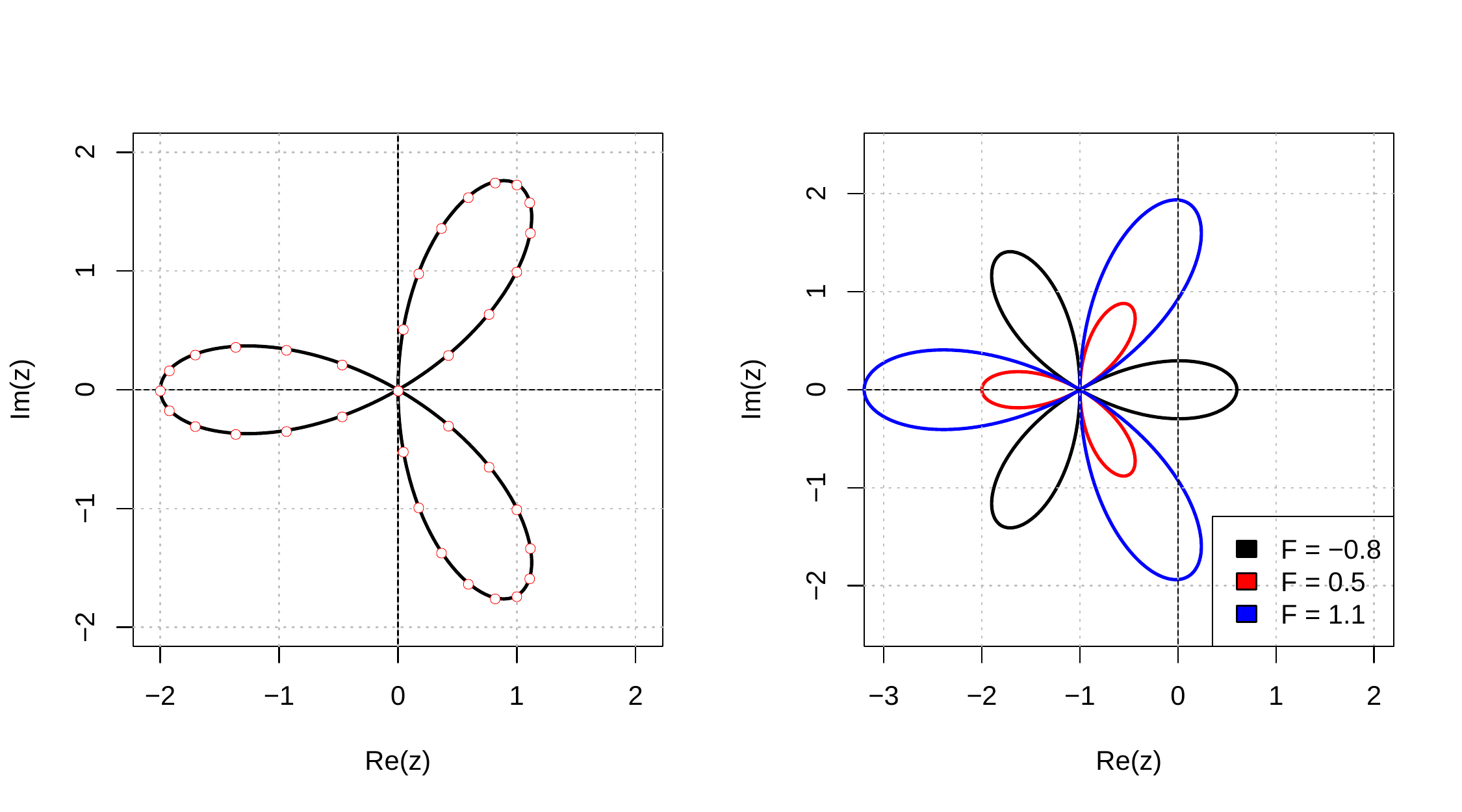}
  \caption{\textbf{Left:} Eigenvalue curve of the linearization of the L96 term $G_3$ about the constant vector $\mathbb{e}$ and location of eigenvalues on this curve for $N = 36$ sites. \textbf{Right:} Eigenvalue curves of $FA - I$ for the L96 system. The curve in the left panel is  stretched by the factor $F$ and then shifted to the left by 1 unit.  Black: $F = -0.8$. If $N$ is even, a pitchfork bifurcation has occurred for $F = -0.5$. Red: $F = 0.5$. The constant solution is stable. Blue: $F = 1.1$. A Hopf bifurcation has occurred for $F \approx 8/9$.  }
  \label{f-curves-1}
\end{figure}

Then the eigenvalues of $FA - I$ lie on a shifted version of such a curve, namely the image of $\mathbb{S}^1$ under the map $z \mapsto Fp_A(z) - 1$.  
This is shown in the right panel of Fig.~\ref{f-curves-1} for the case of the L96 system.  
For small $|F|$, the curve is always entirely in the left half plane and close to $-1$. If the constant term of $p_A$ vanishes, i.e. the diagonal of $A$ is zero, then also $p_A(1) = 0$ and this curve passes through the point $-1$ for all real $F$. As $F$ increases in magnitude, the image of the complex unit circle under $p_A$ is expanded. Then some points on the curve may cross the imaginary axis and reach into the right half plane, leading to bifurcations of the constant solution $\mathbf{x}_F$.

For the L96 case $G = G_3$ with $N= 36$ and $p_A(z) = - z^{-2} + z$, a real eigenvalue of $FA-I$ becomes positive as $F$ decreases below $- \frac{1}{2}$, resulting in a pitchfork bifurcation. 
For positive $F$,
a pair of complex eigenvalues crosses the imaginary axis as $F$ increases beyond $\left(\cos \frac{\pi}{9} + \sin \frac{\pi}{18} \right)^{-1} \approx 0.898198 $, another pair of eigenvalues  crosses the imaginary axis as $F$ increases beyond $\left( \cos \frac{2\pi}{9} + \sin \frac{\pi}{9} \right)^{-1} \approx 0.902474$, and two more pairs of eigenvalues  cross the imaginary axis as $F$ further increases beyond $1$, as the above discussion of the spectrum of the matrix $A$ implies.

Whenever a pair of complex eigenvalues crosses the imaginary axis due to increased forcing $F>0$, a Hopf bifurcation is expected to occur off the branch of constant solutions. As Fig.~\ref{f-curves-0} shows, such Hopf bifurcations are therefore also expected to occur for $G_5$ and $G_6$. The penultimate column in Table~1 lists the types of the first expected bifurcation for $G_1,\dots,G_8$ for increasing $F>0$. Whenever a bifurcation occurs for a $\mathscr{G}$-map, it also occurs for its tilde conjugate, since the shape of the curve does not change.

For the purpose of determining the initial bifurcation when $F$ is negative, taking a $\mathscr{G}$-map $G_j$ with $F = F^- < 0$ is equivalent to taking $-G_j$ with $F = -F^-$. Therefore, to visualize the initial bifurcation behavior for negative $F$ we may simply flip the eigenvalue curves about the imaginary axis. The last column in Table~1 gives the types of the first expected bifurcation for $G_1,\dots,G_8$ for decreasing $F<0$.

Some of the results in the rest of this section are independent of the sign of $F$. For the case of the L96 system, we will focus on the case of positive $F$.

\subsection{Normal Form at the First Hopf Bifurcation}
\label{sec-3-lyapunov}

We now present an approach to find the first Lyapunov coefficient for any system of the form Eq.~(\ref{e-lorenz-general}) that undergoes a Hopf bifurcation as $F$ increases. We assume throughout that $G$ is quadratic and equivariant. For the L96 system, we show that the first Hopf bifurcation as $F$ increases is always supercritical, i.e. its first Lyapunov exponent is negative, for all site numbers $N \ge 4$. 

Fix $N \ge 4$, let $G$ be a $\mathscr{G}$-map, let $A$ be the linearization of $G$ about the constant vector $\mathbf{e}$, and assume  such that there is a smallest $F = F_1 > 0$ such that $FA - I$ has a pair of purely imaginary eigenvalues, say $\pm i \tau_0$.
Then there is $0 < k < N/2$ such that $i \tau_0 = F_1 p_A(\omega_N^k) - 1$, where $p_A$ is the Laurent polynomial associated with $A$ and $\omega_N = \exp(2 \pi i/N)$. Note that there is more than one choice of $k$ if two eigenvalues cross the imaginary axis simultaneously for the same $F_1$. We write $z_1 = \omega_N^k$, and note that 
\begin{equation}
F_1 = \frac{1}{\Re p_A(z_1) } 
\, , \quad \tau_0 = \frac{\Im p_A(z_1)}{\Re p_A(z_1) } \, .
\label{e-F1-omega0}
\end{equation}
To compute the first Lyapunov exponent for this Hopf bifurcation, we use the approach described in \cite{kuznetsov1999} and \cite{kuznetsov2013}. This requires the computation of normal form coefficients for the cubic approximation of the system that is satisfied by $\mathbf{y} = \mathbf{x} - F_1 \mathbf{e}$. 

Since the system is already quadratic, the quadratic approximation is exact and has the form
\begin{equation}
\dot{\mathbf{y}} = \mathcal{A} \mathbf{y} + \tfrac12 \mathcal{B}(\mathbf{y},\mathbf{y})
\label{e-system-quadratic}
\end{equation}
where $\mathcal{A} = F_1A  - I$ and $\mathcal{B}$ is as in Eq.~(\ref{e-bilinear}). 

Consider first the general case of a system~(\ref{e-system-quadratic}), where $\mathcal{A}$ is assumed to be a general circulant matrix with an eigenvalue pair on the imaginary axis. Let $p_\mathcal{A}$ be the Laurent polynomial that is associated with $\mathcal{A}$. As summarized in Appendix A, the eigenvectors of $\mathcal{A}$ are the columns $\mathbf{q}_\ell$ of the normalized $N \times N$ Fourier matrix, and the eigenvalues are $p_\mathcal{A}(\omega_N^\ell)$. Let $\mathbf{q}_k$ be the right eigenvector of $\mathcal{A}$ for the eigenvalue $i \tau_0$, let $\bar{\mathbf{q}}_k = \mathbf{q}_{N-k}$ be its complex conjugate, and let $\mathbf{p}_k = \mathbf{q}_{k}$ be the eigenvector of $\mathcal{A}^T$ for the eigenvalue $- i \tau_0 = p_\mathcal{A}(\omega_N^k)$. All eigenvectors are normalized to unit length. According to \cite{kuznetsov1999} and \cite{kuznetsov2013}, section 5.4, the first Lyapunov coefficient then is    
\begin{equation}
I_1 = \frac{1}{2 \tau_0} \Re \left( 
- 2 \langle \mathbf{p}_k, \mathcal{B}(\mathbf{q}_k, 
\mathcal{A}^{-1} \mathcal{B}(\mathbf{q}_k, \bar{\mathbf{q}}_k)) \rangle + 
\langle \mathbf{p}_k, \mathcal{B}(\bar{\mathbf{q}}_k, (2 i \tau_0 I -  \mathcal{A})^{-1} \mathcal{B}(\mathbf{q}_k, \mathbf{q}_k)) \rangle 
\right)  
\label{e-Lyapunov-1}
\end{equation}
where $\langle\; , \, \rangle$ is the usual scalar product in $\mathbb{C}^N$ (linear in the second argument, conjugate-linear in the first). This quantity can be now computed in closed form, because the eigenvectors of a circulant matrix are columns of the Fourier matrix and $\mathcal{B}$ maps pairs of such vectors to multiples of these eigenvectors (by adding their indices).

\begin{lemma}
Consider system~(\ref{e-system-quadratic}) where $\mathcal{A}$ is a real circulant matrix with associated Laurent polynomial $p_\mathcal{A}$ and $\mathcal{B}$ comes from Eq.~(\ref{e-quadratic}) with equivariant $G$. Define $P_\mathcal{B}$ as in Eq.~(\ref{e-BPolynomial}). Assume that $\mathcal{A}$ has a pair of purely imaginary eigenvalue $i \tau_0 = p_\mathcal{A}(\omega_N^k), \, - i \tau_0$, and set $z_1 = \omega_N^k$.  Then the first Lyapunov exponent of the system~(\ref{e-system-quadratic}) is
\begin{equation}
I_1 = \frac{1}{2 \tau_0 N} \left( -2 P_\mathcal{B}(z_1, \bar{z}_1) \Re \frac{P_\mathcal{B}(z_1,1)}{p_\mathcal{A}(1)}  + \Re \frac{P_\mathcal{B}(z_1,z_1)P_\mathcal{B}(z_1^2,\bar{z_1})}{2 i \tau_0 - p_\mathcal{A}(z_1^2)} \right) \, .
\label{e-Lyapunov-2}
\end{equation}
\label{l-lemma1}
\end{lemma}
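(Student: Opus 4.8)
The plan is to substitute the explicit circulant eigenvector structure into the general first Lyapunov coefficient formula~(\ref{e-Lyapunov-1}) and exploit the key algebraic fact that the symmetric bilinear map $\mathcal{B}$ associated with an equivariant quadratic $G$ sends a pair of Fourier columns $\mathbf{q}_j, \mathbf{q}_\ell$ to a scalar multiple of $\mathbf{q}_{j+\ell}$ (indices mod $N$), with the scalar encoded by the two-variable polynomial $P_\mathcal{B}$ referenced as Eq.~(\ref{e-BPolynomial}) in Appendix A. First I would record this ``index-addition'' identity precisely: write $\mathcal{B}(\mathbf{q}_j, \mathbf{q}_\ell) = \frac{1}{\sqrt N}\, P_\mathcal{B}(\omega_N^j, \omega_N^\ell)\, \mathbf{q}_{j+\ell}$ for a suitable normalization constant coming from the unit-length scaling of the Fourier columns (this is where the overall $1/N$ in~(\ref{e-Lyapunov-2}) will originate, together with the inner products $\langle \mathbf{q}_j, \mathbf{q}_\ell\rangle = \delta_{j\ell}$). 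I would also note the symmetry and reality properties: $P_\mathcal{B}$ is symmetric in its two arguments because $\mathcal{B}$ is, and $\overline{P_\mathcal{B}(z,w)} = P_\mathcal{B}(\bar z, \bar w)$ because $G$ is real.

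Next I would evaluate the two terms of~(\ref{e-Lyapunov-1}) one at a time. For the first term, $\mathcal{B}(\mathbf{q}_k, \bar{\mathbf{q}}_k) = \mathcal{B}(\mathbf{q}_k, \mathbf{q}_{N-k})$ is a multiple of $\mathbf{q}_0 = \frac{1}{\sqrt N}\mathbf{e}$, with coefficient proportional to $P_\mathcal{B}(z_1, \bar z_1)$; applying $\mathcal{A}^{-1}$ multiplies by $1/p_\mathcal{A}(1)$ (here one uses that $p_\mathcal{A}(1)$ is the eigenvalue of $\mathcal{A}$ on the constant eigenvector $\mathbf{q}_0$, and that it is nonzero — a point I would flag, see below); then pairing $\mathbf{q}_k$ against this via $\mathcal{B}$ lands on a multiple of $\mathbf{q}_k$ itself, which has unit inner product with $\mathbf{p}_k = \mathbf{q}_k$, contributing the factor $P_\mathcal{B}(z_1, 1)$. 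Collecting the scalars yields $-2\, P_\mathcal{B}(z_1,\bar z_1)\, P_\mathcal{B}(z_1,1)/p_\mathcal{A}(1)$ up to the normalization constant, matching the first summand of~(\ref{e-Lyapunov-2}) after taking real parts. For the second term, $\mathcal{B}(\mathbf{q}_k,\mathbf{q}_k)$ is a multiple of $\mathbf{q}_{2k}$ with coefficient $\propto P_\mathcal{B}(z_1,z_1)$; applying $(2i\tau_0 I - \mathcal{A})^{-1}$ multiplies by $1/(2i\tau_0 - p_\mathcal{A}(z_1^2))$ since the eigenvalue of $\mathcal{A}$ on $\mathbf{q}_{2k}$ is $p_\mathcal{A}(\omega_N^{2k}) = p_\mathcal{A}(z_1^2)$; then pairing $\bar{\mathbf{q}}_k = \mathbf{q}_{N-k}$ with $\mathbf{q}_{2k}$ via $\mathcal{B}$ gives a multiple of $\mathbf{q}_{2k + (N-k)} = \mathbf{q}_k$, which again pairs to $1$ against $\mathbf{p}_k$, contributing $P_\mathcal{B}(z_1^2, \bar z_1)$. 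This reproduces the second summand. Finally I would reconcile the $1/(2\tau_0)$ prefactor of~(\ref{e-Lyapunov-1}) with the $1/(2\tau_0 N)$ of~(\ref{e-Lyapunov-2}) by tracking the single leftover $1/N$ from the three applications of the index-addition identity against the inner-product normalization; I expect exactly one net factor of $1/N$ to survive, with the rest cancelling against the unit-length eigenvector conventions.

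I anticipate two places that need care rather than brute force. The first is bookkeeping the normalization constants consistently: depending on whether one defines $\mathcal{B}$ via~(\ref{e-quadratic}) with the $\tfrac12$ or absorbs it, and whether the Fourier columns are normalized to unit length or not, the constant in the index-addition identity shifts, and one must verify that the net effect is precisely the stated $1/N$ — this is the main obstacle, since an off-by-$N$ or off-by-$2$ error would invalidate later quantitative conclusions such as the sign computations for the L96 trefoil. The second subtlety is the well-definedness of the two resolvent applications: $\mathcal{A}^{-1}$ requires $p_\mathcal{A}(1) \neq 0$, and $(2i\tau_0 I - \mathcal{A})^{-1}$ requires $2i\tau_0 \neq p_\mathcal{A}(z_1^2)$, i.e.\ that $2i\tau_0$ is not itself an eigenvalue of $\mathcal{A}$ (no $1{:}2$ resonance on the relevant Fourier mode); I would state these as the standing nondegeneracy hypotheses under which the normal-form reduction of~\cite{kuznetsov1999,kuznetsov2013} is valid, noting that for the L96 application they can be checked directly from $p_\mathcal{A}(z) = F_1(z - z^{-2}) - 1$. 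Everything else is a finite, mechanical substitution.
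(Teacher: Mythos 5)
Your proposal is correct and follows essentially the same route as the paper's proof: it applies the index-addition identity $\mathcal{B}(\mathbf{q}_j,\mathbf{q}_\ell)=\tfrac{1}{\sqrt N}P_\mathcal{B}(\omega_N^j,\omega_N^\ell)\,\mathbf{q}_{j+\ell}$ from Proposition~\ref{prop-Fourier} term by term to Eq.~(\ref{e-Lyapunov-1}), with the resolvents acting diagonally on $\mathbf{q}_0$ and $\mathbf{q}_{2k}$, and the surviving $1/N$ tracked exactly as you describe. Your explicit flagging of the nondegeneracy hypotheses $p_\mathcal{A}(1)\neq 0$ and $2i\tau_0\neq p_\mathcal{A}(z_1^2)$ is a small addition the paper leaves implicit.
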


\begin{proof}
We obtain for the first term inside the parentheses in Eq.~(\ref{e-Lyapunov-1})
\begin{align*}
\mathcal{B}(\mathbf{q}_k, \bar{\mathbf{q}}_k) &= \frac{P_\mathcal{B}(z_1, \bar{z_1})}{\sqrt{N}} \mathbf{q}_0= \frac{P_\mathcal{B}(z_1, \bar{z_1})}{\sqrt{N}} \mathbf{e}\\
\mathcal{A}^{-1} \mathcal{B}(\mathbf{q}_k, \bar{\mathbf{q}}_k)&=  \frac{1}{p_\mathcal{A}(1)}\frac{P_\mathcal{B}(z_1, \bar{z_1})}{\sqrt{N}} \mathbf{e}\\
\mathcal{B}(\mathbf{q}_k, \mathcal{A}^{-1} \mathcal{B}(\mathbf{q}_k, \bar{\mathbf{q}}_k))&=
 \frac{P_\mathcal{B}(z_1, \bar{z_1})P_\mathcal{B}(z_1, 1)}{p_\mathcal{A}(1)N} \mathbf{q}_k\\
\langle \mathbf{q}_k, \mathcal{B}(\mathbf{q}_k, \mathcal{A}^{-1} \mathcal{B}(\mathbf{q}_k, \bar{\mathbf{q}}_k))\rangle &=
 \frac{P_\mathcal{B}(z_1, \bar{z_1})P_\mathcal{B}(z_1, 1)}{p_\mathcal{A}(1)N} \, .
\end{align*}
Note that $P_\mathcal{B}(z_1, \bar{z_1}) \in \mathbb{R}$.  For the second term we obtain similarly
\begin{align*}
\mathcal{B}(\mathbf{q}_k, \mathbf{q}_k)&=\frac{P_\mathcal{B}(z_1,z_1)}{\sqrt{N}} \mathbf{q}_{2k} \\
(2 i \tau_0 I -  \mathcal{A})^{-1} \mathcal{B}(\mathbf{q}_k, \mathbf{q}_k) &=
\frac{1}{2 i \tau_0 - p_\mathcal{A}(z_1^2) } \frac{P_\mathcal{B}(z_1,z_1)}{\sqrt{N}} \mathbf{q}_{2k}\\
\mathcal{B}(\bar{\mathbf{q}}_k, (2 i \tau_0 I -  \mathcal{A})^{-1} \mathcal{B}(\mathbf{q}_k, \mathbf{q}_k))
&= \frac{1}{2 i \tau_0 - p_\mathcal{A}(z_1^2)} \frac{P_\mathcal{B}(z_1,z_1)P_\mathcal{B}(z_1^2,\bar{z}_1)}{N} \mathbf{q}_{k}\\
\langle \mathbf{p}_k, \mathcal{B}(\bar{\mathbf{q}}_k, (2 i \tau_0 I -  \mathcal{A})^{-1} \mathcal{B}(\mathbf{q}_k, \mathbf{q}_k)) \rangle &=  \frac{P_\mathcal{B}(z_1,z_1)P_\mathcal{B}(z_1^2,\bar{z}_1)}{(2 i \tau_0  - p_\mathcal{A}(z_1^2))N} \, .
\end{align*}
Combining these two computations, we obtain Eq.~(\ref{e-Lyapunov-2}).
\end{proof}

For the L96 system, it is now possible to show that the first Hopf bifurcation is always supercritical, for all site numbers $N \ge 4$.

\begin{theorem}
Consider the L96 system for $N \ge 4$ sites. Let $F_1$ be the smallest positive forcing for which the linearization $FA - I$ has eigenvalues on the imaginary axis. Then the first Lyapunov exponent is negative, i.e. the resulting Hopf bifurcation is supercritical. 
\label{t-thm2}
\end{theorem}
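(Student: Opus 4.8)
The plan is to feed the concrete L96 data into the closed form for $I_1$ in Lemma~\ref{l-lemma1} and reduce the sign question to an elementary fact about which Fourier mode bifurcates first. Here $G = G_3$, so $p_A(z) = z - z^{-2}$ (Table~1), $\mathcal A = F_1 A - I$, and $p_\mathcal A(z) = F_1(z - z^{-2}) - 1$; the Fourier-space polynomial of the bilinear form $\mathcal B_L$ in Eq.~(\ref{e-bilinear}) is $P_\mathcal B(w,z) = \frac{z}{w} + \frac{w}{z} - \frac{1}{wz^{2}} - \frac{1}{w^{2}z}$. I fix the crossing eigenvalue $i\tau_0$ with $\tau_0 > 0$, write $z_1 = \omega_N^{k} = e^{i\theta}$, and note from Eq.~(\ref{e-F1-omega0}) that $F_1 = 1/\Re p_A(z_1) = 1/(\cos\theta - \cos 2\theta)$, with $\cos\theta - \cos 2\theta = (2\cos\theta + 1)(1 - \cos\theta) > 0$.

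First I would record the evaluations that make the Lyapunov formula collapse: $p_\mathcal A(1) = -1$ (since $p_A(1) = 0$), $P_\mathcal B(z_1,1) = p_A(z_1)$, $P_\mathcal B(z_1,\bar z_1) = z_1^{2} + z_1^{-2} - z_1 - z_1^{-1} = -2\Re p_A(z_1) = -2/F_1 \in \mathbb R$, $P_\mathcal B(z_1,z_1) = 2 - 2z_1^{-3}$ and $P_\mathcal B(z_1^{2},\bar z_1) = z_1^{3} - 1$, so that $P_\mathcal B(z_1,z_1)\,P_\mathcal B(z_1^{2},\bar z_1) = 2(z_1^{3} + z_1^{-3} - 2) = 4(\cos 3\theta - 1) \in \mathbb R$ and is $\le 0$. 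Substituting into Eq.~(\ref{e-Lyapunov-2}), the first term in the parentheses becomes $-2(-2/F_1)(-1/F_1) = -4/F_1^{2}$, and with $D := 2i\tau_0 - p_\mathcal A(z_1^{2})$ one gets
\[
I_1 = \frac{1}{2\tau_0 N}\left(-\frac{4}{F_1^{2}} + 4(\cos 3\theta - 1)\,\frac{\Re D}{|D|^{2}}\right).
\]
Because $\tau_0, N, F_1 > 0$ and $\cos 3\theta - 1 \le 0$, it is enough to show $\Re D > 0$: then the second term is $\le 0$, the first is strictly negative, so $I_1 < 0$; and $\Re D > 0$ also guarantees $D \ne 0$, so Lemma~\ref{l-lemma1} genuinely applies.

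The heart of the argument is therefore the sign of $\Re D = 1 - F_1(\cos 2\theta - \cos 4\theta)$. Setting $c = \cos\theta$ and using $\cos\theta - \cos 2\theta = (2c+1)(1-c)$ together with $\cos 2\theta - \cos 4\theta = -2(c^{2}-1)(4c^{2}-1)$, the fraction simplifies to $F_1(\cos 2\theta - \cos 4\theta) = 2(2c-1)(c+1) = 4c^{2} + 2c - 2$, hence $\Re D = 3 - 2c - 4c^{2}$, which is positive exactly when $c < \tfrac14(\sqrt{13}-1) \approx 0.6514$ (for $c$ in the relevant range $c \in (-\tfrac12, 1)$). So the whole theorem reduces to: the grid maximizer $\theta^{*} = 2\pi k/N$ of $\phi \mapsto \cos\phi - \cos 2\phi = (2\cos\phi + 1)(1-\cos\phi)$ over $k = 1,\dots,N-1$ satisfies $\cos\theta^{*} < \tfrac14(\sqrt{13}-1)$; the lower bound $\cos\theta^{*} > -\tfrac12$ is automatic, since the maximum value is $\ge g(2\pi/N) > 0$ for every $N \ge 4$ and $g(\phi) > 0$ iff $\cos\phi \in (-\tfrac12, 1)$. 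To get the upper bound I would exploit that the continuous maximum of $g(\phi) = \cos\phi - \cos 2\phi$ sits at $\cos\phi^{*} = \tfrac14$ (so $\phi^{*} \approx 75.5^{\circ}$, where $g = \tfrac98$, consistent with $F_1 \to 8/9$ as $N \to \infty$) and that $g$ is strictly increasing on $(0, \phi^{*})$: for $N \ge 14$ the grid spacing $2\pi/N \le 25.7^{\circ}$ forces the largest grid point not exceeding $\phi^{*}$ to lie above $75.5^{\circ} - 25.7^{\circ} \approx 49.8^{\circ}$, past the angle $\approx 49.4^{\circ}$ where $\cos$ equals $\tfrac14(\sqrt{13}-1)$, so unimodality confines $\theta^{*}$ to angles with $\cos\theta^{*}$ below the threshold; the finitely many cases $4 \le N \le 13$ are verified directly, the tightest being $N = 7$ with $\cos(2\pi/7) \approx 0.624$. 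This gives $\Re D > 0$ for all $N \ge 4$ and hence $I_1 < 0$.

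The one real obstacle I expect is that last grid estimate. The algebra of the first two steps is pure bookkeeping, but controlling where the bifurcating mode $e^{i\theta^{*}}$ lands — in particular ruling out the narrow window of angles below $\approx 49.4^{\circ}$ in which $\Re D$ would change sign — requires combining the unimodality of $\cos\phi - \cos 2\phi$ with a lower bound on the mesh density plus a short finite check for small $N$. It is worth noting that the conclusion is robust: even if $\Re D$ were slightly negative, the dominant $-4/F_1^{2}$ term would still force $I_1 < 0$ unless $|\Re D|$ were large (which it is not), so nothing delicate hinges on the exact threshold; the clean statement, however, is simply that $\Re D > 0$ throughout.
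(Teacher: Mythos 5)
Your proposal is correct, and it follows the same overall reduction as the paper: evaluate the closed form of Lemma~\ref{l-lemma1} using $P_L(z_1,1)=p_A(z_1)$, $P_L(z_1,\bar z_1)=-2\Re p_A(z_1)$, and $P_L(z_1,z_1)P_L(z_1^2,\bar z_1)=4(\cos 3\theta -1)\le 0$, so that everything hinges on the sign of $\Re D$ with $D = 2i\tau_0 - p_{\mathcal A}(z_1^2)$. Where you diverge is in establishing that sign. The paper's argument is a one-line spectral observation: $p_{\mathcal A}(z_1^2) = F_1 p_A(\omega_N^{2k}) - 1$ is itself an eigenvalue of $F_1A - I$ (since $\omega_N^{2k}$ is again an $N$-th root of unity), and by minimality of $F_1$ no eigenvalue of $F_1A - I$ lies in the open right half plane, so $\Re D = -\Re p_{\mathcal A}(z_1^2)\ge 0$ with no computation and no case analysis. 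Your route instead computes $\Re D = 3 - 2\cos\theta - 4\cos^2\theta$ explicitly and proves positivity by locating the maximizing grid angle of $\cos\phi-\cos 2\phi$ relative to the threshold $\cos\theta = (\sqrt{13}-1)/4$, which requires the unimodality/mesh argument for $N\ge 14$ plus a finite check for $4\le N\le 13$ (the algebra and the tight case $N=7$ check out). What your version buys: it yields the \emph{strict} inequality $\Re D>0$, which guarantees $2i\tau_0\notin\sigma(\mathcal A)$ and hence that the resolvent in Lemma~\ref{l-lemma1} actually exists — a non-resonance point the paper's $\ge 0$ argument leaves implicit. What the paper's version buys: brevity, no dependence on the explicit form of $p_A$ beyond $z_1^2$ being a grid point, and immediate transferability to the other advection terms ($G_5$, $G_6$) mentioned at the end of the section, where your trigonometric identities would have to be redone from scratch.
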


\begin{proof}
We start by recalling that $\mathcal{A} = F_1 A - I$ and therefore $p_\mathcal{A}(z) = F_1 p_A(z) - 1, \, p_A(z) = z - z^{-2}$. Since $p_A(1) = 0$, therefore $p_\mathcal{A}(1) = -1$. Next, recall that in the L96 case  $P_\mathcal{B}(z,w) = P_L(z,w) = (z-z^{-2})w^{-1} + (w-w^{-2})z^{-1}$, see Eq.~(\ref{e-LPolynomial}).  

Now examine the terms in Eq.~(\ref{e-Lyapunov-2}). Let $t_1 = 2 k \pi/N$ and $z_1 = \exp(i t_1)$ as before such that $F_1 p_A(z_1) - 1 = i \tau_0$. Then since $P_L(z_1,\bar{z}_1) = 2 \cos 2 t_1 - \cos t_1$ and  $P_L(z,1) = z - z^{-2} = p_A(z)$, we obtain $\Re p_A(z_1) = \cos t_1 - \cos 2t_1$. The first term in the parentheses in  Eq.~(\ref{e-Lyapunov-2}) therefore becomes
$- 4 \left(\cos t_1 - \cos 2 t_1 \right)^2$.

For the second term in the parentheses, we first observe that
$P_L(z_1,z_1)P_L(z_1^2,\bar{z}_1) = 2(z_1^3 - 2 + z_1^{-3}) =  4 (\cos 3 t_1 - 1)$ is also a real number. Therefore Eq.~(\ref{e-Lyapunov-2}) now takes the form
\begin{equation}
I_1 = \frac{4}{2 \tau_0 N} \left(-\left(\cos t_1 - \cos 2 t_1 \right)^2 + (\cos 3 t_1 - 1)\Re \, \frac{1}{2 i \tau_0 +1 - F_1p_A(z_1^2)} \right) .  
\label{e-Lyapunov-3}
\end{equation}
Here $\frac{2 \pi}{7} \le t_1 \le \frac{\pi}{2}$, where the lower bound is attained for $N = 7$ and the upper bound for $N = 12$. Now $-\left(\cos t - \cos 2 t \right)^2$ and $\cos 3t - 1$ are both strictly negative on $(0, 2 \pi/3)$. Also clearly $F_1 p_A(z_1^2) - 1$ cannot have positive real part, since this number is an eigenvalue of $F_1 A - I$ and none of these eigenvalues are in the right half plane. Consequently, $\Re  \, \frac{1}{2 i \tau_0 +1 - F_1p_A(z_1^2)}$ is non-negative. All this implies that $I_1 < 0$ for all possible $t_1$ and therefore the first Hopf bifurcation is always supercritical. 
\end{proof}

We note that the same approach may be used to show that the first Hopf bifurcations for the advection terms $G_5$ and $G_6$, defined in Eq.~(\ref{e-form-k3}), are also always supercritical. 

\subsection{Determining Advection Terms from the Linearization}
\label{sec-3-adv-from-lin}

We now discuss whether it is possible to retrieve advection terms given just the eigenvalue curve. Consider the spaces $\mathscr{G}_1,\, \mathscr{G}_2,\, \mathscr{G}_3$ identified in Theorem~\ref{thm-all.quadratic}. Linearizing a $\mathscr{G}$-map about the constant vector $\mathbf{e}$ results in a circulant matrix $A$ whose top row gives the coefficients of an associated Laurent polynomial $p_A$. This is clearly a linear operation. A $k$-localized $\mathscr{G}$-map has a Laurent polynomial of the form $d_{-k}z^{-k} + \dots + d_k z^k$ whose coefficients $d_j = A_{0j} = \sum_\ell Q_{j\ell}$ must add to zero, as discussed earlier.

Observe first that for maps in $\mathscr{G}_1$ this defines a bijection between two two-dimensional vector spaces. Thus for any Laurent polynomial $p_A(z) = d_{-1}z^{-1} + d_0 + d_1z$ with $d_{-1} + d_0 + d_1 = 0$ there exists a unique 1-localized $\mathscr{G}$-map.

Consider next the set of maps in $\mathscr{G}_2$ and the associated Laurent polynomials, which have the form 
\begin{equation}
p_A(z) = d_{-2}z^{-2} + d_{-1}z^{-1} + d_0 + d_1z + d_2z^2 .
\label{e-Laurent-2}
\end{equation}
The space $\mathscr{G}_2$ is six-dimensional, while the associated Laurent polynomials belong to a four-dimensional space, due to the condition $\sum_{|j| \le 2} d_j  =0$. Indeed, it is easy to verify that any four of the Laurent polynomials of the form Eq.~(\ref{e-Laurent-2}) in Table~1 along with their tilde conjugate polynomials $p_A(z^{-1})$ span this space. Consequently there is a two-dimensional null space of such maps whose associated Laurent polynomial vanishes. One can verify that this null space is spanned by the map
\begin{equation}
G_0 = G_3 - 2 \tilde G_3 + \tilde G_1 - G_2 
\label{e-G0}
\end{equation}
together with the associated $\tilde G_0$. A Laurent polynomial of the form Eq.~(\ref{e-Laurent-2}) determines a 2-localized $\mathscr{G}$-map up to multiples of $G_0$ and $\tilde G_0$. In particular, for systems of the form Eq.~(\ref{e-lorenz-general}) where $G$ is a linear combination of $G_0$ and $\tilde G_0$, the linearization about any stationary solution $\mathbf{x}_F = F \mathbf{e}$ always has the form $\dot{\mathbf{y}} = - \mathbf{y}$. Such stationary solution are therefore globally asymptotically stable for the full system and must be unique.

The space $\mathscr{G}_3$ is 12-dimensional, while the space of associated Laurent polynomials is 6-dimensional, resulting in a 6-dimensional null space of 3-localized $\mathscr{G}$-maps whose linearizations about $\mathbf{e}$ vanish.

\subsection{Eigenvalue Curves Resulting in a Hopf Bifurcation}
\label{sec-3-Hopf}

We wish to identify conditions such that the resulting complex curve has a pair of lobes, symmetric about the real axis, on the right, which will therefore cross the imaginary axis simultaneously and with positive velocity as 
$F$ increases. As a pair of eigenvalues on these lobes crosses the imaginary axis, the constant stationary solution $F \mathbf{e}$ will lose its stability in a Hopf bifurcation.

We will first consider the simplest case of a 2-localized eigenvalue curve. Take $G \in \mathscr{G}_2$ and let $p_A$ be the associated Laurent polynomial of the linearization of $G$ about $\mathbf{e}$. Given $F \in \mathbb{R}$, the real and imaginary parts of the eigenvalues of $FA -I$ are of the form $F p_A(e^{2\pi i s}) - 1$ where $p_A$ is of the form Eq.~(\ref{e-Laurent-2}). The real and imaginary parts of $p_A(e^{2\pi i s})$ can be written as
\begin{equation}
\begin{aligned}
    &\Lambda_R(s) := \Re p_A(e^{2\pi i s}) = R_0 + R_1 \cos (2\pi s) + R_2 \cos (4\pi s) \\
    &\Lambda_I(s) := \Im p_A(e^{2\pi i s}) = I_1 \sin (2\pi s) + I_2 \sin (4\pi s).
\end{aligned}
\label{e-eig-curve-re-im}
\end{equation}
Here $R_j = d_j + d_{-j}, \, I_j = d_j - d_{-j}$ for $j = 1, \, 2$ and $R_0 = -R_1 - R_2$. 

Note first that $\Lambda_R$ is even about $s = 1/2$ while $\Lambda_I$ is odd about this point. A calculus argument shows that $\Lambda_R$ has critical points at $s=0$ and $s=1/2$. If $R_2 \ne 0$ and $-1 < \frac{R_1}{4R_2} < 1$, there are two additional critical points  
\begin{equation}
s_1 = \frac{1}{2\pi} \cos^{-1}  \big\vert \frac{R_1}{4R_2} \big\vert \, ,  \quad s_2 = 1 - s_1 .
\label{e-2s-values}
\end{equation}
These are positive maxima if $4R_2 < R_1 < -4R_2$, i.e. $R_2<0$, and negative minima otherwise. Therefore in the case $R_2<0$ a Hopf bifurcation will occur for $F>0$, while in the case $R_2>0$ a Hopf bifurcation will occur for $F<0$.

Since $\Lambda_I(s)$ is odd about $s =1/2$, it follows that $\Lambda_I(s_1) = - \Lambda_I(s_2)$. Requiring that these two values are distinct is equivalent to the constraint $I_2R_1 + 2I_1R_2 \neq 0$. 

In summary, in order for a L96-like system with a 2-localized advection term to undergo a Hopf bifurcation for some value of the forcing parameter $F$, it is sufficient that its Laurent polynomial $p_A$ satisfy the two following conditions:
\begin{enumerate}
    \item $\vert d_{-1}+d_1 \vert < 4 \vert d_{-2}+d_2 \vert$
    \item $(d_{-2}-d_2)(d_{-1}+d_1) \neq 2(d_1-d_{-1})(d_{-2}+d_2)$
\end{enumerate}
Then, for large $N$, the first Hopf bifurcation will occur very near $F = F_1 = \Lambda_R (s_1) ^{-1}$.

There is another case which can lead to a Hopf bifurcation. Note that when $N$ is even, the critical point $s = 1/2$ of $\Lambda_R$ corresponds always to a real eigenvalue of $A$, and then the stationary solution $F \mathbf{e}$ may lose its stability  in a pitchfork bifurcation. But if $N$ is odd, there is no eigenvalue corresponding to $s = 1/2$ and there are two complex conjugate eigenvalues at $s = \frac{1}{2} \pm \frac{1}{2N}$. These may cross the imaginary axis as the magnitude of $F$ increases, resulting in a Hopf bifurcation. If $s = 0$ and $s = 1/2$ are the only critical points of $\Lambda_R$, i.e. $\vert R_1 \vert \geq 4 \vert R_2 \vert$, this always happens. If the critical points of $\Lambda_R$ in Eq.~(\ref{e-2s-values}) exist, i.e. $\vert R_1 \vert < 4\vert R_2 \vert$, then this may happen for sufficiently large $N$. In the remaining cases, $\Lambda_R$ is non-positive everywhere and no bifurcation off the constant stationary solution is possible. Details are left to the reader.

For $k$-localized advection terms with $k \geq 3$, Eq.~(\ref{e-eig-curve-re-im}) may be extended in the obvious way. The problem of finding the nontrivial critical points of $\Lambda_R$ may always be reduced to finding the roots of a $(k-1)$-th degree polynomial in $\cos(2\pi s)$. For each real root $r$ satisfying $-1 < r < 1$, we get a pair solutions of the form Eq.~(\ref{e-2s-values}) with $R_1/4R_2$ replaced by $r$. These are symmetric about $s=1/2$ with $0 < s_1 < 1/4$ and thus $\vert s_2 - s_1 \vert > 1/2$.

In Figure~\ref{fig-hopf-g1-g5} are plotted some examples of eigenvalue curves, multiplied by $F$ and shifted by one to the left. The eigenvalue curve in Fig.~\ref{fig-hopf-g1-g5}c results from a $\mathscr{G}$-map that is a linear combination of the $\mathscr{G}$-maps that produce the eigenvalue curves in Fig.~\ref{fig-hopf-g1-g5}a and Fig.~\ref{fig-hopf-g1-g5}b. A Hopf bifurcation occurs for positive $F$ in Fig.~\ref{fig-hopf-g1-g5}b and Fig.~\ref{fig-hopf-g1-g5}c, but no bifurcation occurs for positive $F$ in Fig.~\ref{fig-hopf-g1-g5}a.

\begin{figure}[ht]
    \centering
        \includegraphics[width=0.32\textwidth]{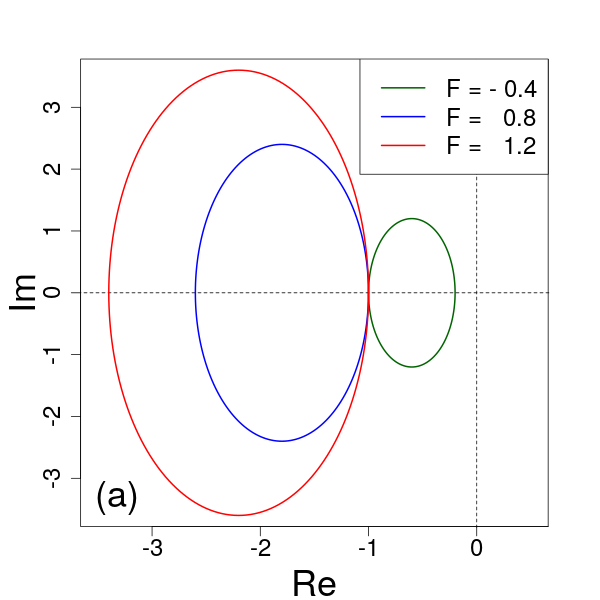}
        \includegraphics[width=0.32\textwidth]{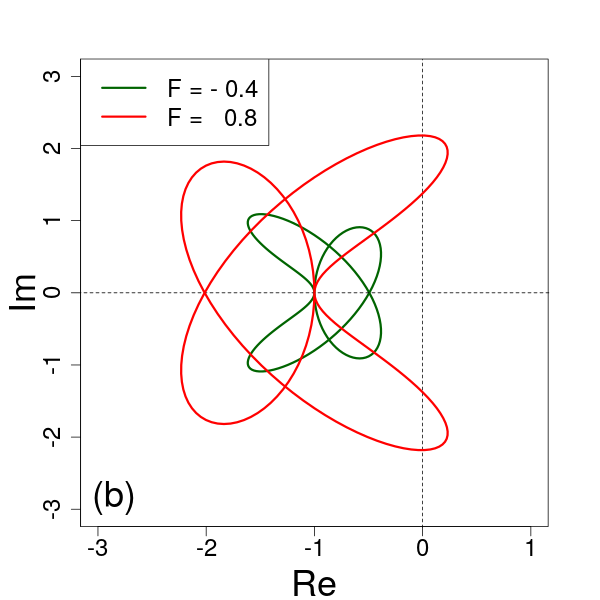}
        \includegraphics[width=0.32\textwidth]{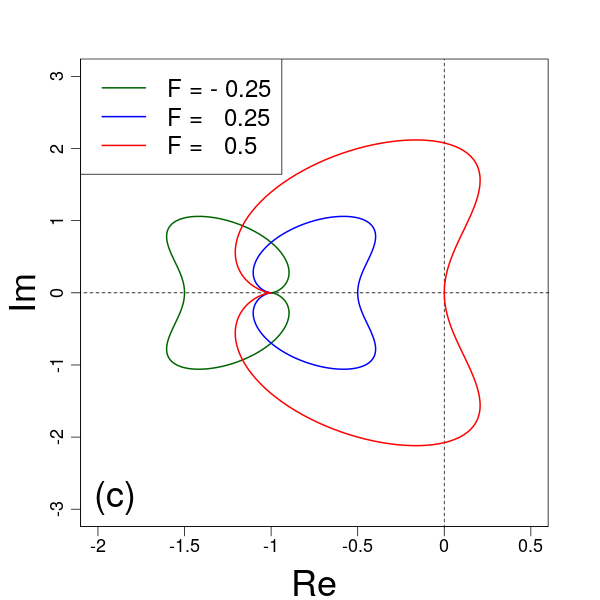}
        \caption{Eigenvalue curves of advection terms in $\mathscr{G}_2$ for various values of $F$. (a) $G_1$, no bifurcation occurs for positive $F$. (b) $G_5$, a supercritical Hopf bifurcation occurs for positive $F$. (c) $-G_1 + \frac{1}{2} G_5$, a supercritical Hopf bifurcation occurs for positive $F$.}
        \label{fig-hopf-g1-g5}
\end{figure}

\subsection{Waves Resulting from the Hopf Bifurcation}
\label{sec-3-waves}

In this section we describe some properties of the stable limit cycle following the first Hopf bifurcation for L96-like systems.

Recall that the $j$-th eigenvector-eigenvalue pair of a $N \times N$ circulant matrix with elements $A_{mn}$ is (see also Eq.~(\ref{e-FourierMatrix}))
\begin{equation}
    v_j = (v_j^k)_{0 \le k < N} = (e^{2\pi i j k / N})_{0 \le k < N}, \ \lambda_j = \sum_{\ell} A_{0\ell} e^{2\pi i j\ell/N}.
\label{e-Fourier1}    
\end{equation}
Therefore small perturbations $\mathbf{y}(t) = (y_0(t),\dots,y_{N-1}(t))^\text{T}$ from the stationary solution of Eq.~(\ref{e-lorenz-general}) with linearization  Eq.~(\ref{e-lorenz-2lin}) evolve over a short time approximately as
$$
y_k(t) = \sum_j b_j v_j^k e^{(F \lambda_j - 1)t} = \sum_j b_j e^{F(\Re\lambda_j -1 ) t} e^{2\pi i j k/N + i F \Im\lambda_j t}
$$
for some coefficients $b_j$ determined by the initial perturbation.

Assume $N$ is large so that $j/N$ can be approximated by $s \in [0,1)$ and suppose $F$ is slightly greater than its first Hopf bifurcation value so that only one pair of eigenvalues has positive real part. Then the $s_1$ mode will travel with some amplitude $\vert W \vert$ according to
\begin{equation}
y_k(t) = W e^{i (2\pi s_1 k + F\Lambda_I(s_1) t ) }.
\label{e-s1-mode}
\end{equation}
where $\Lambda_I$ is as in Eq.~(\ref{e-eig-curve-re-im}). This mode has wavelength $s_1^{-1} > 4$ sites since $0 < s_1 < 1/4$. The $s_2$ mode gives the same result (up to an irrelevant overall sign in the exponent) since $k$ is an integer, and therefore $\exp(2\pi i (1-s_1) k) = \exp(-2\pi i s_1 k)$.

As $F$ increases, other pairs of eigenvalues will cross the imaginary axis, so more modes will grow. Since the $s_1$ mode grows fastest, it is expected to dominate. We can approximate the phase velocities and group velocity of the resulting wave as
\begin{equation}
c_p(s) = -\frac{F\Lambda_I(s)}{2\pi s} \, , \quad c_g = \frac{F\Lambda^\prime_I (s_1)}{2\pi}
\label{e-wave-vel}
\end{equation}
where the prime denotes a derivative with respect to $s$. Note that $\text{sgn}(c_p(s))$ is the same for all $s$ due to the symmetry of the eigenvalue curve. If we replace $G$ by $\tilde{G}$ in our system, then the sign of the phase velocity is reversed since the eigenvalue curve is traced in the opposite direction.

For 2-localized advection terms, some algebra shows that $c_p$ and $c_g$ have the same sign if and only if the coefficients $I_1, \, I_2$ in Eq.~(\ref{e-eig-curve-re-im}) satisfy
$\frac{I_2}{I_1+2s_1I_2}>\frac{s_1}{2(1-s_1^2)}$ and opposite sign if the inequality is reversed. The reverse inequality always holds if $I_2$ is negative. For the L96 system, the phase velocity is negative and the group velocity is positive with roughly the same magnitude, so disturbances travel in the opposite direction as the wave train, as can be seen in Fig.~\ref{fig-hov-f8}.

\section{Coexistence of Stable Limit Cycles}
\label{sec-4}

Consider again the L96 system on $N$ sites. As $F$ increases past the first Hopf bifurcation value $F_1$, a stable limit cycle always appears.  A second unstable limit cycle appears at the second Hopf bifurcation point $F_2 > F_1$. There is abundant numerical evidence that as $F$ increases further past some value $F_3 > F_2$,  there exist two stable limit cycles.  In fact, for $N = 12$, two pairs of complex eigenvalues cross the imaginary axis simultaneously (a Hopf-Hopf bifurcation) as $F$ increases past $F_1 = F_2 = 1$, and two stable limit cycles appear immediately. 

An explanation for this phenomenon was given in \cite{VanKekem2018travelling}. The authors construct an embedding of the L96 system~(\ref{e-lorenz-1}) into a two-parameter system by adding a linear term $\alpha C \mathbf{x}$ to the right hand side of Eq.~(\ref{e-lorenz-1}), where $C$ is a suitably chosen circulant matrix. For some small positive $\alpha_0$ that depends on $N$, as $F$ increases, two pairs of eigenvalues then cross the imaginary axis simultaneously at some $F = \tilde F$.

By the analysis in \cite{kuznetsov2013}, under suitable conditions there exist two stable limit cycles for $ F > \tilde F, \, \alpha = \alpha_0$. These conditions may be expressed in terms of normal form coefficients for the system. Then if $\alpha_0$ is sufficiently small, it is plausible that this occurs also for $\alpha$ near $\alpha_0$, e.g. for $\alpha = 0$. In \cite{VanKekem2018travelling}, this is carried out largely numerically.

Our contribution to this question consists in the following modifications and extensions of the approach in \cite{VanKekem2018travelling}.

\begin{enumerate}
\item
We use a new choice of perturbation matrix $C$, which simplifies the analysis.
\item
We show how to compute all normal form coefficients analytically, using Proposition~\ref{prop-Fourier} in Appendix A.
\item 
We approximate $F_3$ using normal form coefficients and compare this approximation to numerical experiments.
\end{enumerate}

\subsection{Perturbation Near Hopf-Hopf Bifurcation}

For $0 \le j < N$, let $\lambda_j(F) = F p_A(\omega_N^j) - 1$ be the $j$-th eigenvalue of the linearization $FA - I$ about the constant state $F \mathbf{e}$.    
Let $\lambda_k(F), \, \lambda_{N-k}(F)$  be the first pair of complex eigenvalues that crosses the imaginary axis at $F = F_1$, i.e. $\lambda_k(F_1) = i \tau_1, \, \lambda_{N-k}(F_1) = -i \tau_1$. Similarly let $\lambda_\ell(F), \, \lambda_{N-\ell}(F)$ be the second pair that crosses the imaginary axis at $F = F_2 > F_1$, with $\lambda_\ell(F_2) = i \tau_2, \, \lambda_{N-\ell}(F_2) = -i \tau_2$. Typically, $|k - \ell| = 1$. Consider now the Fourier matrix $\mathcal{F}_N$, defined in Eq.~(\ref{e-FourierMatrix}), with columns $\mathbf{q}_j, \, 0 \le j < N$. Define the matrix $C_\ell = \mathbf{q}_\ell \mathbf{q}_{N-\ell}^{\text{T}} + \mathbf{q}_{N-\ell} \mathbf{q}_\ell^{\text{T}}$. This is a real valued rank 2 circulant matrix with eigenvectors $\mathbf{q}_\ell$ and $\mathbf{q}_{N-\ell}$, both with eigenvalue 1. All other columns $\mathbf{q}_j$ are in the null space of $C_\ell$.        

Now consider the perturbed system
\begin{equation}
\dot{\mathbf{x}} = G_L(\mathbf{x}) - \mathbf{x} + \alpha C_\ell\mathbf{x}  + F \mathbf{e}
\label{e-lorenz-1v-pert}
\end{equation}
for $\alpha \ge 0$. Its linearization about the constant state $F \mathbf{e}$  is $\dot{\mathbf{y}} = (FA - I + \alpha C_\ell)  \mathbf{y}$ and its eigenvalues are $\tilde \lambda_j(F) = Fp_A(\omega_N^j) - 1+ \alpha (\delta_{j\ell} + \delta_{j, N-\ell})$, where $\delta_{j\ell}$ is the Kronecker delta. Thus for any fixed $\alpha \ge 0$, $(\lambda_k(F), \, \lambda_{N-k}(F))$ is still a pair of eigenvalues of the perturbed linearization that crosses the imaginary axis at $F = F_1$.  The pair of eigenvalues $(\tilde \lambda_\ell(F), \, \tilde \lambda_{N-\ell}(F)) = (\lambda_\ell(F) + \alpha, \, \lambda_{N- \ell}(F) + \alpha)$ of $FA - I + \alpha C_{\ell}$  crosses the imaginary axis at $F = F_2(1-\alpha)$. For $\alpha = \alpha_0 : = \frac{F_2 - F_1}{F_2}$, both pairs of eigenvalues cross the imaginary axis simultaneously at $F = F_1$.

This is illustrated in Fig.~\ref{f-unfolding}. The left panel shows the eigenvalues of $F_1A - I$ (black and red circles) and the eigenvalues of $F_1A - I + \alpha_0 C_\ell$ (black and green circles), for $N=14$ sites. The right panel shows curves in the $(F, \alpha)$ plane where bifurcations occur.  The values $F = F_1$ (blue circle) and $F = F_2$ (green circle) are marked on the axis $\{\alpha = 0 \}$ (black). There is a Hopf bifurcation for system~(\ref{e-lorenz-1v-pert}) for all $\alpha \ge 0$ at $F = F_1$ (blue line), when $\lambda_k(F)$ and its complex conjugate cross the imaginary axis. There is also a Hopf bifurcation for all $F = F_2(1-\alpha)$ (green line), when $\tilde \lambda_\ell(F)$ and its complex conjugate cross the imaginary axis. For $F = F_1, \, \alpha = \alpha_0$ (red triangle), two pairs of eigenvalues are on the imaginary axis. The remaining features in the right panel will be explained below.

We note that all Hopf bifurcations described here are supercritical. Their first Lyapunov coefficients can be expressed in ways that are completely analogous to Eq.~(\ref{e-Lyapunov-3}), with $i \tau_0$ replaced by some other imaginary number and $F_1$ replaced by $F = F_2(1 - \alpha)$. We leave the details to the reader.    

\begin{figure}[ht]
    \centering
        \includegraphics[width = 0.95\textwidth]{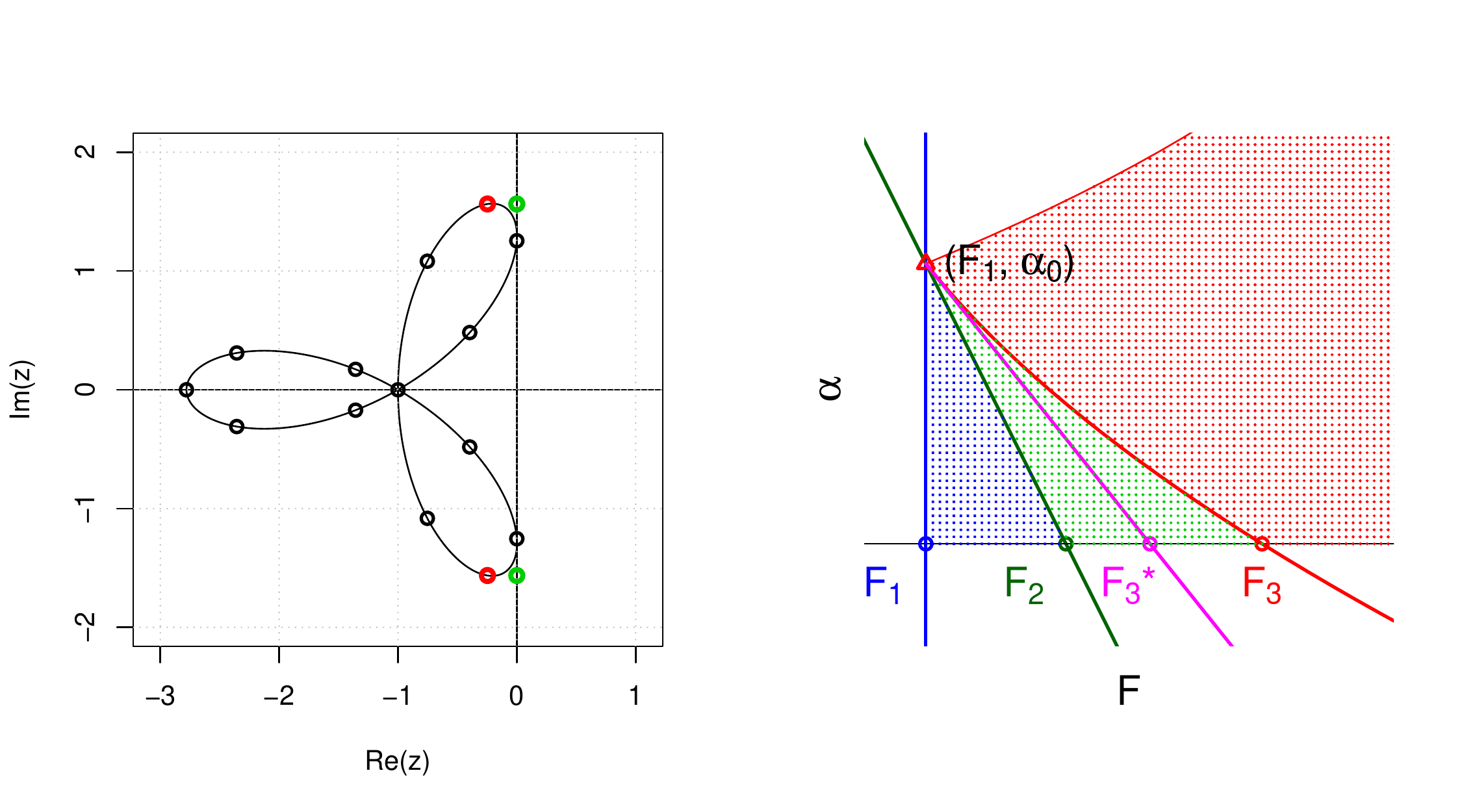}
        \caption{\textbf{Left:} Eigenvalues of $F_1A - I$ (black and red) and of $F_1A - I + \alpha_0 C_\ell$ (black and green), for $N=14$. 
        \textbf{Right:} Bifurcation diagram of system~(\ref{e-lorenz-1v-pert}) in the $(F,\alpha)$ plane. Blue line: Hopf bifurcation ($\Re \, \lambda_k = 0$).  Blue stipples: A stable limit cycle exists. Green line: Hopf bifurcation ($\Re \, \tilde \lambda_\ell = 0$). Green stipples: A second periodic orbit exists (unstable). Red curves: Neimark-Sacker (N-S) bifurcation. Red stipples: Two stable limit cycles coexist. Magenta line: Linear approximation of N-S bifurcation curve. Red triangle: Hopf-Hopf bifurcation at $(F_1, \, \alpha_0)$.
        }
\label{f-unfolding}
\end{figure}

\subsection{Normal Form at Hopf-Hopf Bifurcation}
\label{sec-4-normal-form}

We now examine the perturbed system~(\ref{e-lorenz-1v-pert}) for $\alpha = \alpha_0, \, F = F_1$. At this set of parameters, two  eigenvalues $\lambda_k(F_1) = i \tau_1 = F_1 p_A(z_1) - 1$ and $\tilde \lambda_\ell(F_1)  = i \tau_2 = F_1 p_A(z_2) - 1 + \alpha_0$ together with their conjugates are on the imaginary axis. Here $z_1 = \omega_N^k$ and $z_2 = \omega_N^\ell$.

The corresponding right eigenvector pairs of the linearization are $\mathbf{q}_k, \, \mathbf{q}_{N-k}$ for the first pair of eigenvalues and $\mathbf{q}_\ell, \, \mathbf{q}_{N-\ell}$ for the second pair. The left eigenvectors of the linearization for all these eigenvalues are the conjugate transposes of the right eigenvectors, i.e. $\mathbf{q}_{N-k}, \, \mathbf{q}_k$ for the first pair and $\mathbf{q}_{N-\ell}, \, \mathbf{q}_\ell$ for the second pair.

To analyze the behavior of the full system near $\alpha = \alpha_0, \, F = F_1$, we use the approach described in \cite{kuznetsov1999} and \cite{kuznetsov2013}, section 8.6. This requires the computation of normal form coefficients for the cubic approximation of the system that is satisfied by $\mathbf{y} = \mathbf{x} - F_1 \mathbf{e}$. As before, this approximation is exact and has the form Eq.~(\ref{e-system-quadratic}), where $\mathcal{B} = \mathcal{B}_L$ is as in Eq.~(\ref{e-bilinear}) and $\mathcal{A} = F_1A + \alpha_0C_\ell - I$.

To study the behavior of the system near $F = F_1, \, \alpha = \alpha_0$, four normal coefficients $p_{rs}, \, r,s = 1,2$ must be computed. These all have the same general form as the first Lyapunov coefficient defined in Eq.~(\ref{e-Lyapunov-1}). For example, 
\begin{align*}
p_{12} =  \Re \, \big( &\langle \mathbf{q}_k, \mathcal{B}_L((i(\tau_1 + \tau_2)I - \mathcal{A})^{-1} \mathcal{B}_L(\mathbf{q}_k, \mathbf{q}_\ell), \mathbf{q}_{N-\ell} \rangle \\
 + &\langle \mathbf{q}_k, \mathcal{B}_L((i(\tau_1 - \tau_2)I - \mathcal{A})^{-1} \mathcal{B}_L(\mathbf{q}_k, \mathbf{q}_{N-\ell}), \mathbf{q}_\ell \rangle \\
  + &\langle \mathbf{q}_k, \mathcal{B}_L(- \mathcal{A}^{-1} \mathcal{B}_L(\mathbf{q}_\ell, \mathbf{q}_{N-\ell}), \mathbf{q}_k \rangle \big)
  \label{e-hopfhopfnormal}
\end{align*}
Since $\mathcal{B}_L$ maps pairs of eigenvectors of $\mathcal{A}$ to multiples of eigenvectors (see Proposition~\ref{prop-Fourier} in Appendix A), these terms can all be evaluated in closed form, in the same manner as in Lemma~\ref{l-lemma1} and Theorem~\ref{t-thm2}. For example, writing $z_1 = \omega_N^k, \, z_2 = \omega_N^\ell$, one obtains
\begin{equation}
p_{12} =  \frac{1}{N}\Re \, \left(\frac{P_L(z_1, z_2) P_L(z_1z_2, \bar z_2)}{i(\tau_1 + \tau_2) +1 - F_1p_A(z_1z_2)} 
+ \frac{P_L(z_1, \bar z_2) P_L(z_1\bar z_2, z_2)}{i(\tau_1 - \tau_2) +1 - F_1p_A(z_1 \bar z_2)} 
- P_L(z_2, \bar z_2) P_L(1, z_1) \right) 
\end{equation}
The other three normal form coefficients $p_{rs}$ can be expressed similarly in terms of the Laurent polynomials $p_A$ and $P_L$ and can therefore be evaluated numerically.  The details of this calculation are left to the reader.

\subsection{Neimark-Sacker Bifurcation}

Consider now the perturbed system in Eq.~(\ref{e-lorenz-1v-pert}) for $\alpha = \alpha_0 = 1 - F_1/F_2$. As $F$ increases past $F_1$, two eigenvalue pairs cross the imaginary axis simultaneously, resulting in a Hopf-Hopf bifurcation. The bifurcation picture now depends on the normal form coefficients $p_{ij}, \, i,\, j = 1, \, 2$ which may be computed as in the previous section. A complete description is given in \cite{kuznetsov2013}, section 8.6. 

In all cases of the L96 system that we examined, it turns out that $p_{11} p_{22} > 0$ (the ``simple case'' in the terminology of  \cite{kuznetsov2013}) and $p_{11}p_{22} - p_{12}p_{21} < 0$ (``type I'' in that reference). 
By the results in  \cite{kuznetsov2013}, there exist smooth curves $\alpha = \gamma_1(F)$ (thick red in Fig.~\ref{f-unfolding}) and $\alpha = \gamma_2(F)$ (thin red in Fig.~\ref{f-unfolding}), both passing through the point $(\alpha_0, F_1)$, such that for $F > F_1$ and close to $F_1$ and for $\gamma_1(F) < \alpha < \gamma_2(F)$ there exist two stable limit cycles for the system~(\ref{e-lorenz-1v-pert}) (red stipples in~Fig.~\ref{f-unfolding}). For   
$1 - F/F_2 < \alpha < \gamma_1(F)$ a stable limit cycle and an unstable limit cycle coexist (green stipples in~Fig.~\ref{f-unfolding}). At the transition curve $\alpha = \gamma_1(F)$, a subcritical Neimark-Sacker (N-S) bifurcation occurs in which a two-dimensional unstable torus bifurcates from the unstable limit cycle. As a result, this limit cycle now becomes stable. Recall that the unstable limit cycle comes into existence at the transition curve $\alpha = 1 - F/F_2$ through a Hopf bifurcation, as noted before.
For $\alpha < 1 - F/F_2$ and $F > F_1$, a single stable limit cycle exists (blue stipples in~Fig.~\ref{f-unfolding}). 

For $\alpha = 0$, it is therefore plausible that two stable limit cycles can coexist for $F> F_3, \,  \gamma_1(F_3) = 0$, assuming that this value is defined. It is usually impossible to obtain detailed analytical information about $\gamma_1$ and therefore about $F_3$. The authors in \cite{VanKekem2018travelling} use numerical methods to trace these curves. Alternatively, using the approach outlined here it is straight forward to compute $\gamma_1'(F_1)$ from the known normal form coefficients (see Section~\ref{sec-4-normal-form}). The tangent line approximation of $\gamma_1$ at $\alpha = \alpha_0$ (magenta line in Fig.~\ref{f-unfolding}) can thus be found and $F_3$ can be approximated by $F_3^\ast = F_1 - F_1/\gamma_1'(F_1)$, shown as a magenta circle in that figure. The approximation is expected to be more accurate if $F_1$ and $F_2$ are relatively close, such that $\alpha_0 = (F_2-F_1)/F_2$ is small.

\subsection{Numerical Results}

We have carried out numerical searches for coexisting stable attractors. For various site numbers $N$, the exact Hopf bifurcation values $F_1, \, F_2$ were computed as well as the approximate N-S bifurcation value $F_3^\ast$, using a linear approximation. Starting with a value $F$ that is somewhat larger than $F_3^\ast$, we then computed 100 numerical solutions with initial data  equal to $F \mathbf{e}$ plus a random normally distributed perturbation, for $0 \le t \le T = 1000$. At this $T$, solutions typically settled into one of several (usually two) stable limit cycles, which could be characterized by their spatial period. Lowering $F$ and following these stable limit cycles, approximate values $\tilde F_3 \approx F_3$ for the N-S bifurcations were found, and by increasing $F$, an approximate value $\tilde F_4$ was obtained up to which two stable limit cycles can be usually be observed.

In Table~2, we show results of these experiments for various site numbers $N$. In addition to the bifurcation parameters $F_i$, etc. we also give the spatial periods $m_1, \, m_2$ for the two limit cycles which appear as $F$ increases past $\tilde F_3$.

\begin{table}
\begin{center}
\begin{tabular}{c| c|c|c|c|c|c|c}
    $N$ & $F_1$ & $m_1$ & $F_2$ & $m_2$ & $F_3^\ast$ & $\tilde F_3$ & $\tilde F_4$ 
    \\
    \hline
    12 & 1 & 4 & 1 & 6 & 1 & 1 & $>2$\\
    14 & .8901 & 7 & 1.1820 & 14 & 1.5206 & not observed & not observed \\
    18 & .8982 & 9 & 1 & 6 & 1.1892 & not observed & not observed  \\
    22 & .9076 & 22 & .9343& 11 & .9915 & .996 & $>4$\\
    28 & .8901 & 14 & .9457 & 28 & 1.0293 & 1.072 & $>3$ \\
    36 & .8982 & 9 & .9025 & 36 & .9094 & .904 & $>2$
\end{tabular}
\end{center}
\label{table:limit.cycles}
\caption{Multiple stable limit cycles are expected for $F \gtrapprox F_3^\ast$ and are found numerically for $\tilde F_3 \le F \le \tilde F_4$. Limit cycles may be characterized by their spatial periods $m_1, \, m_2$.}
\end{table}

The results show that if $F_2$ is close to $F_1$ (i.e. if $\alpha_0$ is small), then the approximation $F_3^\ast$ is also close to $F_2$ and two stable coexisting limit cycles can be observed ($N = 12, \, 22, \, 28, \, 36$). This happens generally whenever $N$ is sufficiently large, since then the eigenvalues of $FA - I$ are more closely spaced. On the other hand, for relatively small $N$ ($N = 14, \, 18$), we find that $F_2$ is substantially larger than $F_1$, therefore $\alpha_0$ is relatively large, and it is not clear if two stable limit cycles coexist for some $F > F_2$. 

If $N$ is sufficiently large and has many small divisors, then more than two stable limit cycles may be observed for moderate $F$. For example, if $N = 144$ and $F = 2$, one can observe four different stable limit cycles, with spatial periods $m \in \{9, \, 24, \, 36, \, 144\}$. In fact, for any site number $N$ and $F$ near 1, there must be limit cycles with spatial periods for all divisors of $N$, since L96-like systems with site numbers that divide $N$ can be embedded periodically into a system with site number $N$.

\section{General Existence and Stability}
\label{sec-5}

We now move on to a second type of generalization of the L96 system. One can modify the system to include site-specific dissipation, advection, and time-dependent forcing terms in order to consider, for example, changes in atmospheric dynamics and predictability over ocean and land. We may therefore consider the general  system
\begin{equation}
    \dot{\mathbf{x}}(t) = C G(\mathbf{x}(t)) - B \mathbf{x}(t) + \mathbf{F}(t),
    \label{eq:inhomogeneous}
\end{equation}
where $C, \, B$ are diagonal matrices and $\mathbf{F}(\cdot)$ is now a vector valued function. Another generalization, where forcing is time-periodic and possibly state dependent, was explored in \cite{lucarini2011}.

\subsection{Stationary Solutions}

We begin by looking at time-independent solutions of Eq.~(\ref{eq:inhomogeneous}), that is solutions of the system $0 = C G(\mathbf{x}) - B \mathbf{x} + \mathbf{F}$. We assume that $C$ and $B$ are positive diagonal matrices and $G$ is a quadratic map. If $G$ is also energy-preserving, then the resulting algebraic system always has at least one solution as the following result shows. We may assume without loss of generality that $C$ is the identity matrix.

\begin{prop}
Let $B$ be a positive diagonal $N \times N$ matrix and assume that $G$ is energy-preserving. Then for any $\mathbf{F} \in \mathbb{R}^N$ there exists a solution $\mathbf{x}$ of  
\begin{equation}
0 = G(\mathbf{x}) - B \mathbf{x} + \mathbf{F}
\label{e-stationary}
\end{equation}
and all such solutions satisfy
\begin{equation}
\|B^{1/2}\mathbf{x}\| \le \|B^{-1/2} \mathbf{F} \| \, .
\label{e-apriori}
\end{equation}

\end{prop}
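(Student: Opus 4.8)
The plan is to obtain the a priori bound first, since it is essentially a one-line energy estimate, and then use it to deduce existence by a degree-theoretic or fixed-point argument. For the bound: suppose $\mathbf{x}$ solves $G(\mathbf{x}) - B\mathbf{x} + \mathbf{F} = 0$. Take the inner product with $\mathbf{x}$. The advection term drops out because $G$ is energy-preserving, i.e. $\mathbf{x}^{\text{T}} G(\mathbf{x}) = 0$ by property \textbf{(ii)}. What remains is $\mathbf{x}^{\text{T}} B \mathbf{x} = \mathbf{x}^{\text{T}} \mathbf{F}$, that is $\|B^{1/2}\mathbf{x}\|^2 = \langle B^{1/2}\mathbf{x}, B^{-1/2}\mathbf{F}\rangle$. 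Cauchy--Schwarz gives $\|B^{1/2}\mathbf{x}\|^2 \le \|B^{1/2}\mathbf{x}\| \, \|B^{-1/2}\mathbf{F}\|$, and dividing by $\|B^{1/2}\mathbf{x}\|$ (the case $\mathbf{x}=0$ being trivial, and only possible if $\mathbf{F}=0$) yields Eq.~(\ref{e-apriori}). Note $B^{1/2}$ and $B^{-1/2}$ make sense because $B$ is a positive diagonal matrix.

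For existence, I would set up a homotopy and invoke Brouwer degree (or, equivalently, the Leray--Schauder / Poincar\'e--Miranda-type argument one gets from a Brouwer fixed point theorem). Define $H:[0,1]\times\mathbb{R}^N \to \mathbb{R}^N$ by $H(\mu,\mathbf{x}) = \mu G(\mathbf{x}) - B\mathbf{x} + \mathbf{F}$. At $\mu = 0$ this is the affine map $\mathbf{x}\mapsto -B\mathbf{x}+\mathbf{F}$, which has the unique zero $\mathbf{x} = B^{-1}\mathbf{F}$ with nonzero Jacobian determinant $\det(-B)\neq 0$, hence Brouwer degree $\pm 1$ on any large ball. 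The key point is that the same energy estimate applies uniformly along the homotopy: pairing $H(\mu,\mathbf{x})=0$ with $\mathbf{x}$ and using $\mathbf{x}^{\text{T}}G(\mathbf{x})=0$ kills the $\mu$-dependent term entirely, so every zero of $H(\mu,\cdot)$, for every $\mu\in[0,1]$, satisfies $\|B^{1/2}\mathbf{x}\|\le\|B^{-1/2}\mathbf{F}\|$. Choosing a ball $\bar B_R(0)$ with $R$ strictly larger than $\|B^{-1/2}\mathbf{F}\|/\sqrt{\lambda_{\min}(B)}$ guarantees no zeros on the boundary sphere for any $\mu$, so the degree is homotopy-invariant; it equals $\pm 1$ at $\mu=0$, hence is nonzero at $\mu=1$, which forces a zero of $H(1,\cdot) = G - B(\cdot) + \mathbf{F}$ inside the ball. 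The reduction to $C = I$ is harmless: writing $CG(\mathbf{x}) - B\mathbf{x} + \mathbf{F} = 0$ as $G(\mathbf{x}) - C^{-1}B\mathbf{x} + C^{-1}\mathbf{F} = 0$ replaces $B$ by the still-positive-diagonal $C^{-1}B$ and $\mathbf{F}$ by $C^{-1}\mathbf{F}$.

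The main obstacle — really the only place one must be slightly careful — is the non-compactness/unboundedness of $\mathbb{R}^N$: a quadratic map is not a compact perturbation of the identity in any infinite-dimensional sense, but here we are in finite dimensions, so plain Brouwer degree on a ball suffices, provided the a priori bound is genuinely uniform in the homotopy parameter. That uniformity is exactly what the energy-preserving property delivers, since it makes the troublesome nonlinear term invisible to the $\mathbf{x}$-pairing regardless of how it is scaled. One alternative, avoiding degree theory, is to rewrite the equation as the fixed-point problem $\mathbf{x} = B^{-1}(G(\mathbf{x})+\mathbf{F})$ and apply Brouwer's theorem on the closed ball of radius $R$ — but this requires checking that $B^{-1}(G(\mathbf{x})+\mathbf{F})$ maps that ball into itself, which is generally false for large quadratic $G$; so the homotopy/degree route is cleaner and I would present that one.
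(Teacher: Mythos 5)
Your proof is correct and follows essentially the same route as the paper's: the identical energy estimate obtained by pairing with $\mathbf{x}$ and applying Cauchy--Schwarz, followed by a finite-dimensional Brouwer-degree homotopy argument whose validity rests on that estimate holding uniformly along the homotopy. The only difference is cosmetic --- you homotope the nonlinearity ($\mu G$) away while the paper homotopes the forcing ($t\,\mathbf{F}$) away; both endpoint maps have degree $(-1)^N \neq 0$ (the paper's because $DG(0)=0$ makes the Jacobian at the unique zero equal to $-B$), so either choice works.
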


\begin{proof}

Taking the scalar product with $\mathbf{x}$, using the fact that $G$ is energy-preserving, and applying the Cauchy-Schwarz inequality results in
$$
\begin{aligned}
0 &= -\mathbf{x}^{\text{T}}  B \mathbf{x} +  \mathbf{x}^{\text{T}}    \mathbf{F} =
-\| B^{1/2} \mathbf{x}\|^2 + \mathbf{x}^{\text{T}} \mathbf{F} \\
&\le -\| B^{1/2} \mathbf{x}\|^2 + \| B^{1/2} \mathbf{x}\|
\| B^{-1/2} \mathbf{F} \| \, .
\end{aligned}
$$
Rearranging this implies the estimate Eq.~(\ref{e-apriori}). 

To show existence, we use a degree argument for continuous maps; see e.g. \cite{allgowerGeorg2012}. Consider the function \begin{equation}
\Phi:[0,1] \times \mathbb{R}^N \, \to \, \mathbb{R}^N, \quad \Phi(t, \mathbf{z}) =  G(\mathbf{z}) - B\mathbf{z} + t \cdot \mathbf{F} \, .
\label{e-homotopy}    
\end{equation}

It is jointly continuous in $t$ and $\mathbf{z}$. Let $K = \{\mathbf{z} \, : \, 
\| B^{1/2}\mathbf{z}\| \le \|B^{-1/2}\mathbf{F}\| + 1 \} \subset \mathbb{R}^N$. Then Eq.~(\ref{e-apriori}) implies that the equation $\Phi(t, \mathbf{x}) = 0$ does not have any solutions on the boundary
$\partial K$ for $0 \le t \le 1$.  Therefore the mapping degree 
$\text{deg}(\Phi(t, \cdot), K, 0)$ is constant for $0 \le t \le 1$. Since for $t = 0$ the only solution is $\mathbf{x} = 0$, due to estimate Eq.~(\ref{e-apriori}), $\text{deg}(\Phi(0, \cdot), K, 0)= \text{deg}(\Phi(1, \cdot), K, 0)= 1$, and the equation $\Phi(1,\mathbf{x}) = G(\mathbf{x}) - B\mathbf{x} + \mathbf{F} = 0$ also has a solution in $K$. 
\end{proof}

\begin{figure}[ht]
    \centering
        \includegraphics[width = 0.8\textwidth]{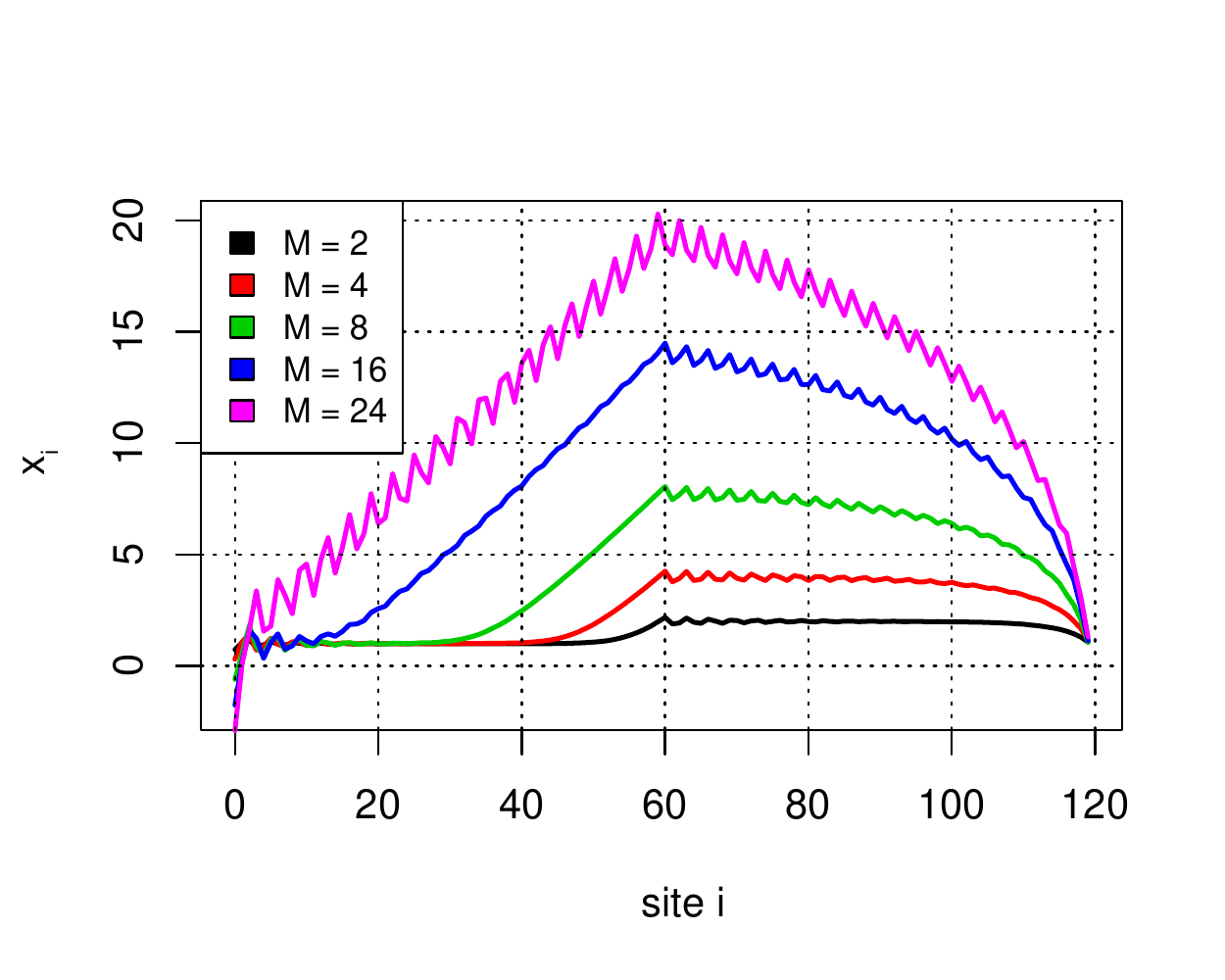}
        \caption{Stationary solutions of Eq.~(\ref{e-stationary}) with $G = G_L, \, C = B = I$  		and inhomogeneous forcing $\mathbf{F}$ for $N = 120$ sites. Here  $F_i = 1$ for 
        $0 \le i  < N/2$ and $F_i = M$ for $i \ge N/2)$.}
\label{fig-stationary}
\end{figure}

It should be noted that solutions of Eq.~(\ref{e-stationary}) need not be unique. Indeed, it is known that for the L96 system there may be many different solutions  for sufficiently large negative $F$, assuming that $N$ is divisible by a large power of 2  (\cite{VanKekem2019symmetries}). These solutions appear in a cascade of pitchfork bifurcations. In general, systems of $N$ quadratic equations may have up to $2^N$ solutions.

Solutions of Eq.~(\ref{e-stationary}) may be computed by choosing a sequence $0 = t_0 < t_1 < t_2 < \dots < t_K = 1$ and finding solutions $\mathbf{x}_i$ of $\Phi(t_i, \mathbf{x}_i) = 0$, with $\Phi$ as in Eq.~(\ref{e-homotopy}). Each solution $\mathbf{x}_i$ is computed using a Newton iteration with $\mathbf{x}_{i-1}$ as a starting value. A thorough discussion of such homotopy methods may be found in \cite{allgowerGeorg2012}. 

This is illustrated in Fig.~\ref{fig-stationary}.  Here $N = 120$ and $\mathbf{F}$ is non-constant, with $F_i = 1$ for $i < N/2$ and $F_i = M$ for $i \ge N/2$, for five different choices of $M$. Solutions were found by choosing $t_i = i \delta$, with $\delta = 10^{-1}$ for $M = 2$ and $\delta = 10^{-3}$ for $M = 24$. For small and moderate $M$, the solutions satisfy $x_i \approx F_i$ except to the left of the transition points $i = 0, i = N/2$. For all $M$, solutions are maximal near $i = N/2$ and minimal near $i = 0$, with roughly monotone transitions and superposed oscillations of period 3. It also appears that the maxima of these solutions are  close to $M$ for small and moderate $M$, but  not for large $M$. It is unclear how to explain this behavior. Note however that this is only one stationary solution for each $M$ out of possibly very many others. We did not investigate the dynamic stability of these stationary solutions, but suspect that they become unstable as $M$ increases. 

\subsection{Dynamic Problem}

An existence result for solutions of the 
time-dependent problem Eq.~(\ref{eq:inhomogeneous}) can be shown under fairly general conditions.

\begin{prop}
Let $C$ be a positive diagonal $N \times N$ matrix, $B$ an arbitrary $N \times N$ matrix, and assume that $G$ is an energy preserving mapping. Then for any $\mathbf{x}_0 \in \mathbb{R}^N$ and any continuous curve $\mathbb{R} \ni t \mapsto \mathbf{F}(t)$ there exists a global solution $\mathbb{R} \ni t \mapsto \mathbf{x}(t)$ of Eq.~(\ref{eq:inhomogeneous}) 
with $\mathbf{x}(0) = \mathbf{x}_0$. 
\end{prop}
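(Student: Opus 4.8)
The plan is to establish global existence by the standard two-step argument: local existence from the Picard--Lindel\"of theorem followed by an a priori bound that rules out finite-time blowup. Since $G$ is quadratic, hence locally Lipschitz, and $t \mapsto \mathbf{F}(t)$ is continuous, the right-hand side of Eq.~(\ref{eq:inhomogeneous}) is a locally Lipschitz function of $\mathbf{x}$ depending continuously on $t$, so a unique maximal solution $\mathbf{x}(t)$ with $\mathbf{x}(0) = \mathbf{x}_0$ exists on some maximal interval $(t_-, t_+)$. The key point is that if $t_+ < \infty$ then $\|\mathbf{x}(t)\| \to \infty$ as $t \to t_+$, so it suffices to show that $\|\mathbf{x}(t)\|$ cannot escape to infinity in finite time.

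First I would derive a differential inequality for the energy $E(t) = \mathbf{x}(t)^{\text{T}} \mathbf{x}(t)$. Differentiating and using Eq.~(\ref{eq:inhomogeneous}) gives
\begin{equation}
\tfrac12 \dot E = \mathbf{x}^{\text{T}} C G(\mathbf{x}) - \mathbf{x}^{\text{T}} B \mathbf{x} + \mathbf{x}^{\text{T}} \mathbf{F}(t).
\label{e-energy-ineq-plan}
\end{equation}
The term $\mathbf{x}^{\text{T}} C G(\mathbf{x})$ is the one subtlety: energy preservation gives $\mathbf{x}^{\text{T}} G(\mathbf{x}) = 0$, but here we have the diagonal matrix $C$ inserted. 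The clean fix is to pass to the rescaled variable $\mathbf{u} = C^{-1/2}\mathbf{x}$ (legitimate since $C$ is positive diagonal), under which the system becomes $\dot{\mathbf{u}} = C^{-1/2} G(C^{1/2}\mathbf{u}) - C^{-1/2} B C^{1/2}\mathbf{u} + C^{-1/2}\mathbf{F}(t)$, and one checks that $\tilde G(\mathbf{u}) := C^{-1/2} G(C^{1/2}\mathbf{u})$ is still energy-preserving because $\mathbf{u}^{\text{T}} \tilde G(\mathbf{u}) = \mathbf{u}^{\text{T}} C^{-1/2} G(C^{1/2}\mathbf{u}) = (C^{1/2}\mathbf{u})^{\text{T}} C^{-1} G(C^{1/2}\mathbf{u})$; this is not quite zero in general, so instead I would work directly with the weighted energy $\tilde E = \mathbf{x}^{\text{T}} C^{-1}\mathbf{x} = \|\mathbf{u}\|^2$, whose time derivative is $\tfrac12 \dot{\tilde E} = \mathbf{x}^{\text{T}} C^{-1} C G(\mathbf{x}) - \mathbf{x}^{\text{T}} C^{-1} B\mathbf{x} + \mathbf{x}^{\text{T}} C^{-1}\mathbf{F}(t) = -\mathbf{x}^{\text{T}} C^{-1} B\mathbf{x} + \mathbf{x}^{\text{T}} C^{-1}\mathbf{F}(t)$, using $\mathbf{x}^{\text{T}} G(\mathbf{x}) = 0$ exactly. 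This is the main obstacle, and the resolution is simply to measure energy in the $C^{-1}$-weighted inner product so that the $C$ from the advection term cancels against $C^{-1}$ before energy preservation is applied.

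From here the estimate is routine. On any bounded time interval $[0,T]$, $\|\mathbf{F}(t)\|$ is bounded by some constant $M_T$, and $\|C^{-1}\| =: \kappa$, $\|C^{-1}B\| =: b$ are fixed constants (note $B$ is arbitrary, so $b$ may be large but is still finite), giving the bound
\begin{equation}
\tfrac12 \dot{\tilde E} \le b\,\|\mathbf{x}\|^2 + \kappa M_T \|\mathbf{x}\| \le C_1 \tilde E + C_2
\end{equation}
for constants $C_1, C_2$ depending on $T$, where I use that $\|\mathbf{x}\|^2$ and $\|\mathbf{x}\|$ are each controlled by $\tilde E$ up to constants since $C^{-1}$ is positive definite. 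Gr\"onwall's inequality then yields $\tilde E(t) \le (\tilde E(0) + C_2 t)e^{2 C_1 t}$ on $[0, T]$, hence a finite bound on $\|\mathbf{x}(t)\|$ on every bounded interval. This contradicts blowup at any finite $t_+$, so $t_+ = \infty$; the argument for $t_- = -\infty$ is identical with time reversed (Gr\"onwall applied backward). Therefore the solution is global, which is what we wanted to prove.
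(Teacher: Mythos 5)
Your proposal is correct and follows essentially the same route as the paper: local existence plus an a priori bound obtained by pairing the equation with $C^{-1}\mathbf{x}$ (equivalently, tracking the weighted energy $\mathbf{x}^{\text{T}}C^{-1}\mathbf{x}$) so that energy preservation kills the advection term, followed by Gr\"onwall and time reversal for $t<0$. The only cosmetic difference is that the paper states the integrated form of the energy identity before applying Gr\"onwall, whereas you use the differential form.
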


\begin{proof}
By standard existence and uniqueness results, there exists a unique solution of Eq.~(\ref{eq:inhomogeneous}) in a maximal open interval $I = (t_0,t_1)$ that contains $t = 0$. To prove that the solution exists on $\mathbb{R}$, it is sufficient to show that it remains bounded on any such open interval $I$. Consider first the set $[0,t_1)$ and assume that $t_1$ is finite. Take the scalar product of Eq.~(\ref{eq:inhomogeneous}) with $C^{-1}\mathbf{x}(t)$, use the assumption that $G$ is energy-preserving, and integrate over $[0,t] \subset [0,t_1)$. The result is the identity
$$
\frac{1}{2} \left( \mathbf{x}(t)^{\text{T}}C^{-1}\mathbf{x}(t) - \mathbf{x}_0^{\text{T}}C^{-1} \mathbf{x}_0\right) =
\int\limits_0^t \left( -\mathbf{x}^{\text{T}}(s)C^{-1}B\mathbf{x}(s) + \mathbf{x}^{\text{T}}(s)C^{-1}\mathbf{F}(s) \right) \, ds \, .
$$
Since $C$ is positive definite, this implies after some standard estimates
$$
\|\mathbf{x}(t)\|^2 \le c_0 + \int\limits_0^t c_1 \|\mathbf{x}(s)\|^2 \, ds +
\int\limits_0^t c_2 \|\mathbf{x}(s)\| \, ds \le c_0 + \int\limits_0^t \left( c_1 + \frac{c_2}{2} \right)  \|\mathbf{x}(s)\|^2 \, ds + \frac{c_2t_1}{2}
$$
where the $c_i$ are suitable constants and in particular $c_2$ is a multiple of $\max_{0 \le s \le t_1} \|\mathbf{F}(s)\|$. By the Gronwall lemma, $\|\mathbf{x}(t)\| \le const.$ for all $t < t_1$, with the constant not depending on $t$. The solution therefore may be continued for all positive $t$. To show that the solution also may be continued for all negative $t$, replace $t$ with $-t$, $B$ with $-B$, and $G$ with $-G$ (which is also energy-preserving), and apply the same argument.    

\end{proof}

\subsection{Global Stability for Small Forcing}

Here, we consider the non-stationary inhomogeneous problem Eq.~(\ref{eq:inhomogeneous}) for constant forcing terms $\mathbf{F}$. If $B$ and $C$ are positive diagonal matrices and if $G$ is quadratic, a straightforward exercise shows that the zero solution is asymptotically stable for zero forcing. In that case, the mapping $\mathbf{x} \mapsto CG(\mathbf{x}) - B\mathbf{x}$ is locally invertible near the origin, and therefore, by a standard perturbation argument, small solutions are also asymptotically stable for appropriately small forcing. Here we show that if  $G$ is in addition energy-preserving, then solutions for sufficiently small forcing  are actually globally asymptotically stable and in particular unique. 

\begin{prop}
Let $B$ and $C$ be positive diagonal $N \times N$ matrices and let $G$ be a quadratic and energy-preserving mapping. Then there exists $\varepsilon > 0$ such that for all constant $\mathbf{F}$ with $\|\mathbf{F}\| < \varepsilon$ and all $\mathbf{x}_0$ the solution of Eq.~(\ref{eq:inhomogeneous}) with $\mathbf{x}(0) = \mathbf{x}_0$ converges to a unique solution $\mathbf{x}_\infty$ of the stationary problem Eq.~(\ref{e-stationary}).  
\end{prop}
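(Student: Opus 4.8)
The plan is to set up an energy-type estimate for the difference of two solutions and show that all solutions are exponentially attracted to a common limit. We may assume $C = I$ without loss of generality, as the diagonal change of variables $\mathbf{y} = C^{-1/2}\mathbf{x}$ reduces the general case to it (at the cost of replacing $B$ by a positive diagonal matrix and $G$ by a conjugated map that is still quadratic and energy-preserving — here one uses that $\mathbf{x}^{\mathrm{T}} G(\mathbf{x}) = 0$ is preserved under $\mathbf{x} \mapsto D\mathbf{x}$ for diagonal $D$, which follows from quadratic homogeneity and the identity for $\mathcal{B}$). By the previous Proposition, for $\|\mathbf{F}\|$ small there is a stationary solution $\mathbf{x}_\infty$, and by the a priori bound Eq.~(\ref{e-apriori}) any stationary solution satisfies $\|B^{1/2}\mathbf{x}_\infty\| \le \|B^{-1/2}\mathbf{F}\|$, so all stationary solutions lie in a ball whose radius is $O(\|\mathbf{F}\|)$.

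The main step is the contraction estimate. Given a solution $\mathbf{x}(t)$ and a stationary solution $\mathbf{x}_\infty$, write $\mathbf{w} = \mathbf{x} - \mathbf{x}_\infty$. Using that $G$ is quadratic, $G(\mathbf{x}) - G(\mathbf{x}_\infty) = A[\mathbf{x}_\infty]\mathbf{w} + G(\mathbf{w})$ with $A[\mathbf{x}_\infty] = \mathcal{B}(\mathbf{x}_\infty, \cdot)$ as in Eq.~(\ref{e-linearization}), so $\mathbf{w}$ satisfies $\dot{\mathbf{w}} = A[\mathbf{x}_\infty]\mathbf{w} - B\mathbf{w} + G(\mathbf{w})$. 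Now differentiate $\tfrac12\|\mathbf{w}\|^2$: since $G$ is energy-preserving, $\mathbf{w}^{\mathrm{T}} G(\mathbf{w}) = 0$, so
\begin{equation}
\frac{d}{dt}\tfrac12\|\mathbf{w}\|^2 = \mathbf{w}^{\mathrm{T}} A[\mathbf{x}_\infty]\mathbf{w} - \mathbf{w}^{\mathrm{T}} B\mathbf{w}.
\end{equation}
The term $\mathbf{w}^{\mathrm{T}} B\mathbf{w} \ge b_{\min}\|\mathbf{w}\|^2$ with $b_{\min} > 0$ the smallest diagonal entry of $B$, while $|\mathbf{w}^{\mathrm{T}} A[\mathbf{x}_\infty]\mathbf{w}| = |\mathcal{B}(\mathbf{x}_\infty,\mathbf{w})^{\mathrm{T}}\mathbf{w}| \le c\|\mathbf{x}_\infty\|\,\|\mathbf{w}\|^2$ for a constant $c$ depending only on $\mathcal{B}$. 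Hence $\tfrac{d}{dt}\tfrac12\|\mathbf{w}\|^2 \le -(b_{\min} - c\|\mathbf{x}_\infty\|)\|\mathbf{w}\|^2$. Choosing $\varepsilon$ so small that $\|\mathbf{F}\| < \varepsilon$ forces $c\|\mathbf{x}_\infty\| < b_{\min}/2$ (possible by the a priori bound, since $\|\mathbf{x}_\infty\| \le \|B^{-1/2}\|\,\|B^{-1/2}\mathbf{F}\|$), we get $\tfrac{d}{dt}\|\mathbf{w}\|^2 \le -b_{\min}\|\mathbf{w}\|^2$, so $\|\mathbf{x}(t) - \mathbf{x}_\infty\| \to 0$ exponentially. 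The same estimate applied to the difference of two stationary solutions forces them to coincide, giving uniqueness of $\mathbf{x}_\infty$; and it shows every trajectory converges to that unique $\mathbf{x}_\infty$.

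I expect the main obstacle to be bookkeeping rather than anything deep: one must make the reduction to $C = I$ cleanly (checking energy-preservation and quadraticity survive the conjugation) and verify that the crude bilinear bound $|\mathcal{B}(\mathbf{u},\mathbf{v})^{\mathrm{T}}\mathbf{w}| \le c\|\mathbf{u}\|\,\|\mathbf{v}\|\,\|\mathbf{w}\|$ interacts correctly with the diagonal weights when $B \ne I$ (one may instead run the estimate in the $\|B^{1/2}\cdot\|$ norm to keep constants transparent). One subtlety worth stating explicitly: the existence of $\mathbf{x}_\infty$ is already guaranteed by the earlier Proposition, so the new content is purely the attractivity/uniqueness, and the argument above never needs local invertibility or a perturbation-of-spectrum argument — the energy identity does everything at once, uniformly in the initial data, which is exactly why the conclusion is \emph{global} asymptotic stability.
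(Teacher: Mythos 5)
Your proposal is correct and follows essentially the same route as the paper: an energy identity for the difference $\mathbf{x}(t)-\mathbf{x}_\infty$, using energy preservation to kill the purely quadratic term, a smallness bound on $A[\mathbf{x}_\infty]$ against the dissipation, and the a priori estimate to convert smallness of $\|\mathbf{F}\|$ into smallness of $\|\mathbf{x}_\infty\|$. The only cosmetic difference is that the paper keeps $C$ and uses the weighted Lyapunov function $\mathbf{y}^{\mathrm{T}}C^{-1}\mathbf{y}$ instead of your preliminary change of variables to reduce to $C=I$.
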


\begin{proof}
Let $\mathbf{x}_\infty$ be any solution of Eq.~(\ref{e-stationary}). Set $\mathbf{y}(t) = \mathbf{x}(t) - \mathbf{x}_\infty$. Since $G$ is quadratic, then due to Eq.~(\ref{e-quadratic.1}) the function $\mathbf{y}(\cdot)$ satisfies
\begin{equation}
    \dot{\mathbf{y}}(t) = C G(\mathbf{y}(t)) + C A[\mathbf{x}_\infty]\mathbf{y}(t) - B \mathbf{y}(t) \, .
\label{e-y.inhomogeneous}
\end{equation}
Here $A$ is a linear matrix valued map that depends only on $G$. Now choose $\delta > 0$ such that $-C^{-1}B + A[\mathbf{x}]$ is still negative semidefinite for all $\|\mathbf{x}\| < \delta$. Multiply Eq.~(\ref{e-y.inhomogeneous}) with $C^{-1}\mathbf{y}(t)$ and use the assumption that $G$ is energy-preserving. It follows that if $\|\mathbf{x}_\infty\| < \delta$, then for some $\lambda > 0$
\begin{equation}
\frac{1}{2}\frac{d}{dt} \mathbf{y}^{\text{T}}(t)C^{-1}\mathbf{y}(t) \le - \lambda
\mathbf{y}^{\text{T}}(t)C^{-1}\mathbf{y}(t) .
\label{e-y.stable}
\end{equation}
This inequality of course implies that $\mathbf{y}(t) = \mathbf{x}(t) - \mathbf{x}_\infty \to 0$ and that $x_\infty$ is unique. By Eq.~(\ref{e-apriori}), there is a constant $c_3$ such that $\|\mathbf{x}\| \le c_3 \|\mathbf{F}\|$ for all solutions of the stationary problem. Thus if $\|\mathbf{F}\| < \varepsilon = \frac{\delta}{c_3}$, then $\|\mathbf{x}_\infty\| < \delta$ and therefore $\mathbf{x}(t) \to \mathbf{x}_\infty$ no matter what $\mathbf{x}_0$ is. 

\end{proof}

\subsection{Numerical Examples}
Recall how to rescale the system~(\ref{e-rescaling}) so that it has the form of Eq.~(\ref{e-lorenz-1v}). If $\tilde{x}_i(t)$ is the solution of Eq.~(\ref{e-rescaling}) with initial state $\tilde{\mathbf{x}}_0$, and $x_i(t)$ is the solution of Eq.~(\ref{e-lorenz-1v}) with $F = \alpha\gamma / \beta^2$ and initial state $\alpha\beta^{-1} \tilde{\mathbf{x}}_0$, then $\tilde{x}_i(t) = \alpha\beta^{-1} x_i(\beta^{-1} t)$.

We now consider inhomogeneous systems of the form
\begin{equation}
    \dot{x}_i = \alpha_i x_{i-1}(x_{i+1} - x_{i-2}) - \beta_i x_i + \gamma_i
    \label{eq:inhomogeneous-component}
\end{equation}
with
$$
(\alpha_i,\beta_i,\gamma_i) = 
\begin{cases}
(1,1,2), \text{ for } i \leq N/2\\
(\alpha,\beta,\gamma), \text{ for } i > N/2
\end{cases}.
$$
This means that at sites $i < \frac{N}{2}$ (``left half''), the system is expected to behave as shown in Fig.~\ref{fig-hov-f2}. At the remaining sites (``right half'') the scaling argument shows that, qualitatively, the dynamics should be approximately the same as the dynamics of the standard system with $F=\alpha \gamma/\beta^2$, multiplied by $\beta/\alpha$ and sped up by a factor $\beta^{-1}$. Although this scaling argument cannot be applied to sites very near the boundaries between the regions, some insight into the dynamics at the boundaries can be gained by considering the behavior on either side.

\begin{figure}[ht]
    \centering
    \includegraphics[width=0.49\textwidth]{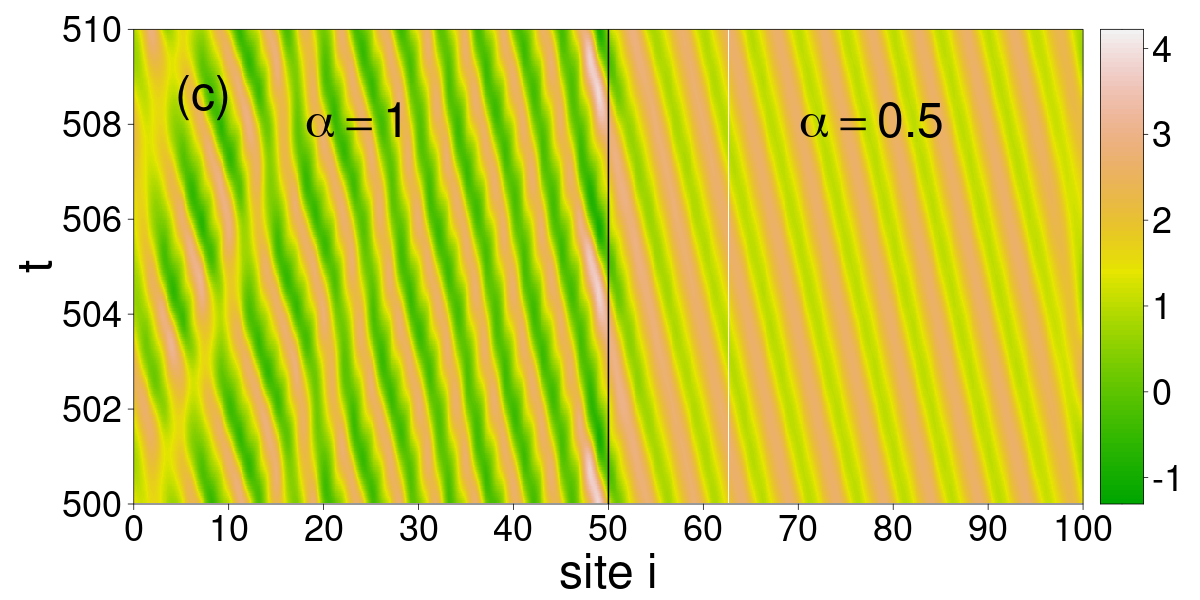}
    \includegraphics[width=0.49\textwidth]{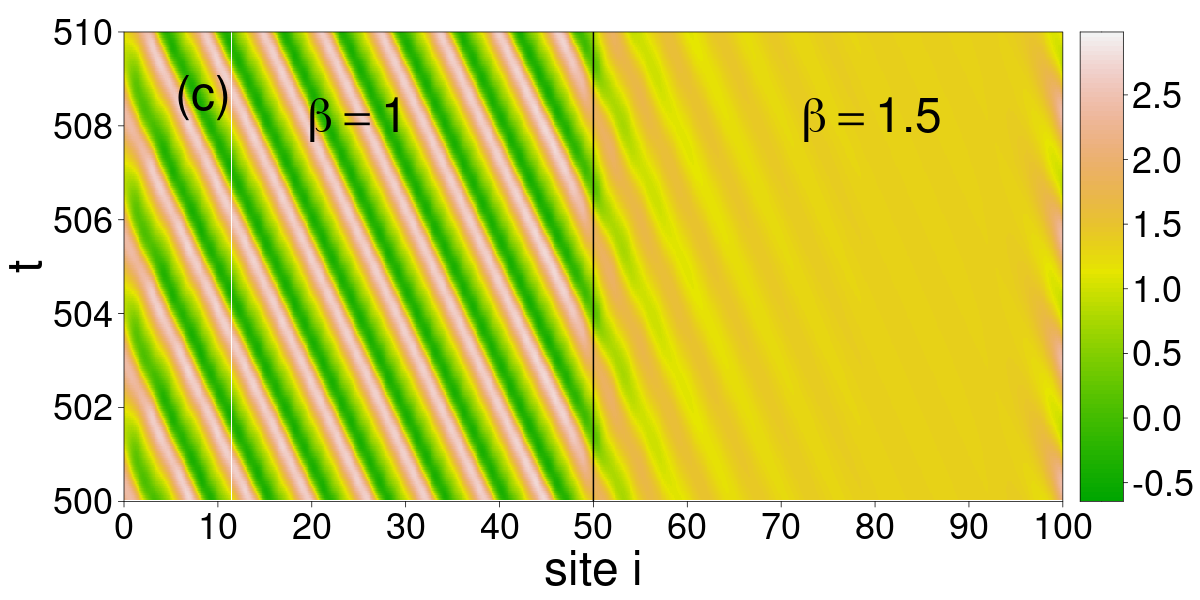}
    \caption{Hovmoeller plots showing inhomogeneous advection and dissipation as described in Eq.~(\ref{eq:inhomogeneous-component}), for $N = 100$. Both panels use the same parameters $(1,1,2)$ in the left half, but solutions have very different behavior. \textbf{Left:} Sites in the right half have parameters $(0.5,1,1)$. Smaller advection in the right half leads to smaller spatial amplitudes. Perturbations are seen to travel to the right.   \textbf{Right:} Sites in the right half have parameters $(1,1.5,1)$. Larger dissipation in the right half leads to nearly constant solutions over a substantial range of sites.}
    \label{fig-inhom_dis_f}
\end{figure}

Figure~\ref{fig-inhom_dis_f} shows two Hovmoeller plots for systems with $N = 100$ and $500 \leq t \leq 510$, with random initial values at $t=0$. Note first that the parameters for the left halves of each panel are the same, but the solution behavior is clearly quite different, showing a slowly moving solution with spatial period near $m = 4$ in the left panel and a faster moving solution with period near $m = 8$ in the right panel. The left panel shows the effect of a lower advection in the right half while the right panel shows the effect of increased dissipation in the right half. Some expected effects can be noticed right away. For example, the left panel shows waves of larger amplitude with midpoint shifted upward in the left half, which leads to large fluctuations at its downstream boundary $i = 50$ around $t=503$ and $t=506$ due to the large positive advection term there. In both halves of the left panel, perturbations at the downstream boundary are seen to travel to the right, as predicted by the discussion in Section~\ref{sec-3-waves}. The right panel shows waves travelling to the left in the left half, while in the right half the effective $F = \alpha \gamma/\beta^2 = 8/9$ now is less than the Hopf bifurcation value and the solution is close to constant for a substantial range of sites.  

We also noticed some unanticipated effects, especially for larger forcing. For example, a very small parameter difference between the regions may completely change which attractor the system reaches.

\section{Conclusions and Open Questions}
\label{sec-6}

In this paper we have described several analytical and numerical tools for the study of the L96 system. These tools are equally useful for several possible generalizations  of the L96 system, which are also proposed here. Specifically, we have exploited the fact that linearizations of the system about constant states are given by circulant matrices to introduce a graphical approach for the study of its spectrum and in order to demonstrate that normal forms of the system at bifurcation points may be computed analytically. This allows one to compute, for example, the first Lyapunov exponent for the L96 system, proving that the Hopf Bifurcation must be supercritical.

Further, we have classified all advection terms that share essential features with the original Lorenz term. Our analysis identifies the original L96 advection term $G_L$ as the simplest such term that produces the signature complex wave behavior. While this version of the system appears to have received the most attention in the literature so far, we have identified and classified a whole class of related advection terms that can be expected to lead to similar dynamics. 

We have also considered possible modifications of the L96 system by allowing site-dependent advection, dissipation, and forcing. We have presented basic results for the existence and stability  of solutions for time-independent and time-dependent problems. We have also explored examples numerically and explained some of the new  phenomena using site-dependent scaling arguments.

The possible modifications presented in this paper open up many avenues of investigation. It seems natural to consider systems with alternative advection terms for large forcing in order to discover whether, and in which cases, the rich dynamic behavior of the L96 system still results. A concrete problem in this domain is to compare two systems having different advection terms but the same linearization and thus the same eigenvalue curve, and to study how they differ with increasing forcing.

There is also a variety of questions related to bifurcations to explore. Do multiple stable limit cycles appear also for other advection terms that admit Hopf bifurcations, followed by chaotic behavior for larger forcing? Do these phenomena persist universally in spite of the large flexibility in the set of quadratic, equivariant, energy-preserving, localized advection terms that has been identified here? It was observed numerically that in the case of large site numbers with many small divisors, more than two stable limit cycles may exist for $F$ near 1. Is there a rigorous explanation for this phenomenon?

A broader range of questions can be asked for inhomogeneous systems. In the case of inhomogeneous advection terms, constant equilibrium solutions can still be found, but the application analysis becomes more involved because linearizations no longer result in circulant matrices. One should ask about the stability of stationary solutions in the case of inhomogeneous forcing such as e.g. in Fig.~\ref{fig-stationary}. How do inhomogeneous advection/dissipation/forcing affect attractors and their basins of attraction? And finally, are there better ways of characterizing and identifying attractors than have been found numerically?

The Lorenz '96 system exhibits a wide range of complex dynamical behavior. Together with its many possible modifications, it is a showcase for a variety of interesting phenomena and at the same time a rich source of mathematical challenges. It is our hope that this paper will ultimately contribute to better attention to this subject, especially from the mathematics community.

\section*{Appendix A: Discrete Fourier Transform}

Here we collect some basic facts relating the L96 system to the discrete Fourier transform. The same arguments can be applied to any $\mathscr{G}$-map. Let 
\begin{equation}
\mathcal{F}_N = \frac{1}{\sqrt{N}} \left( e^{2 \pi i k \ell/N}\right)_{0 \le k, \ell < N} = \frac{1}{\sqrt{N}} \left( \omega_N^{ k \ell}\right)_{0 \le k, \ell < N}
\label{e-FourierMatrix}
\end{equation}
be the normalized symmetric Fourier matrix, where as before $\omega_N = e^{2  \pi i/N}$. We denote the columns of $\mathcal{F}_N$ by $\mathbf{q}_\ell$ (see also Eq.~(\ref{e-Fourier1})). The conjugate matrix $\mathcal{F}_N^\ast$ has columns $\mathbf{q}_{N-\ell}$ and is the inverse of $\mathcal{F}_N$.

Let $A$ be any real circulant $N \times N$ matrix, with top row $(c_0, c_1, \dots, c_{N-1})$.  Let $p_A(z) = \sum c_jz^j$ be the associated Laurent polynomial, where the summation extends over $-N/2 < j \le N/2$. Then $A$ has eigenvalues $p_A(\omega_N^\ell)$ with right eigenvectors $\mathbf{q}_\ell$. The left eigenvector of $A$ for the eigenvalue $p_A(\omega_N^\ell)$ is the $\ell$-th row of $\mathcal{F}_N^\ast$ or $\mathbf{q}_{N-\ell}^{\text{T}}$. The matrix $\mathcal{F}_N^\ast A \mathcal{F}_N$ is diagonal with entries $p_A(\omega_N^\ell)$.

We now describe the action of a bilinear $\langle \rho \rangle$-equivariant map $\mathcal{B}$, defined in Eq.~(\ref{e-quadratic}), in terms of the Fourier matrix. As before, all indices are understood modulo $N$. Let $Q$ be the associated symmetric matrix, defined in Eq.~(\ref{e-define-Q}), extended $N$-periodically for negative indices. 

\begin{prop}
\label{prop-Fourier}
For $z, \, w \in \mathbb{C}, \, z \ne 0 \ne w$ let 
\begin{equation}
P_{\mathcal{B}}(z,w) = \sum_{-N/2 < r, \, s \le N/2}z^rQ_{rs}w^s.
\label{e-BPolynomial}
\end{equation}
(a) Let $0 \le k, \, \ell < N$, then 
\begin{equation}
\mathcal{B}(\mathbf{q}_k, \, \mathbf{q}_\ell) = \frac{P_\mathcal{B}(\omega_N^k, \omega_N^\ell)}{\sqrt{N}} \mathbf{q}_{k + \ell} \quad
\text{and} \quad 
G(\mathbf{q}_k) =  \frac{P_\mathcal{B}(\omega_N^k,\omega_N^k)}{2 \sqrt{N}} \mathbf{q}_{2k} .
\label{e-QuadraticPolynomial}
\end{equation}
(b) In particular, for the L96 case,
\begin{equation}
P_L(z,w) = z^{-1}(w - w^{-2}) + w^{-1}(z - z^{-2})
\label{e-LPolynomial}
\end{equation}
and
\begin{equation}
\mathcal{B}_L(\mathbf{q}_k, \, \mathbf{q}_\ell) = \frac{P_L(\omega_N^k, \omega_N^\ell)}{\sqrt{N}} \mathbf{q}_{k + \ell} \,, \quad
G_L(\mathbf{q}_k) = \frac{1 - \omega_N^{-3k}}{\sqrt{N}} \mathbf{q}_{2k} .
\label{e-FourierQuadratic1}
\end{equation}
(c) Let $A = \mathcal{B}(\cdot, \mathbf{e})$ be the linearization of a quadratic map $G$ about the unit vector $\mathbf{e}$ and let $p_A$ be the associated Laurent polynomial with coefficients coming from the top row of $A$. Then for all non-zero $z \in \mathbb{C}$ 
\begin{equation}
p_A(z) = P_\mathcal{B}(z,1) \, .
\label{e-LinearizationPolynomial}
\end{equation}  \\
(d) Let $\mathbf{y} \in \mathbb{R}^N$. Then
\begin{equation}
\left( \mathcal{F}_N^\ast G(\mathcal{F}_N \mathbf{y}) \right)_\ell = \sum_{j = 0}^N 
\frac{P_\mathcal{B}(\omega_N^j, \omega_N^{\ell - j})}{2\sqrt{N}} y_j y_{\ell - j} \, .
\label{e-FourierQuadratic2}
\end{equation}
\end{prop}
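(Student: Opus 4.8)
The plan is to prove (a) by a direct computation using the equivariance of $\mathcal{B}$, then obtain (b) and (c) as special cases, and finally deduce (d) by a bilinear expansion. All indices are understood modulo $N$ throughout.

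For (a), I would first record the component form of an equivariant bilinear map. From Eq.~(\ref{e-define-Q}) together with the equivariance relation $\mathcal{B}(\rho^m\mathbf{x},\rho^m\mathbf{y}) = \rho^m\mathcal{B}(\mathbf{x},\mathbf{y})$, reading off the $0$-th component of both sides (recall $(\rho^m\mathbf{z})_0 = z_m$) gives $\mathcal{B}(\mathbf{x},\mathbf{y})_m = \sum_{r,s} x_{r+m}\,Q_{rs}\,y_{s+m}$, where the sum may be taken over any complete residue system, in particular $-N/2 < r,s \le N/2$, since $\mathbf{x}$, $\mathbf{y}$, and $Q$ are $N$-periodic in the relevant indices. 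Substituting the unnormalized Fourier vectors $v_k = (\omega_N^{km})_m$ and $v_\ell$, the factor $\omega_N^{km}\omega_N^{\ell m} = \omega_N^{(k+\ell)m}$ pulls out of the double sum and what remains is exactly $P_\mathcal{B}(\omega_N^k,\omega_N^\ell)$ as defined in Eq.~(\ref{e-BPolynomial}); that is, $\mathcal{B}(v_k,v_\ell) = P_\mathcal{B}(\omega_N^k,\omega_N^\ell)\,v_{k+\ell}$, the right side depending only on $(k+\ell)\bmod N$. Passing to the normalized columns $\mathbf{q}_k = v_k/\sqrt{N}$ introduces the factor $1/\sqrt{N}$ and yields the first identity in Eq.~(\ref{e-QuadraticPolynomial}); the second follows from $G = \tfrac12\mathcal{B}(\cdot,\cdot)$.

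Part (b) is then bookkeeping: from $G_L(\mathbf{x})_0 = x_{-1}x_1 - x_{-1}x_{-2}$ the only nonzero symmetrized entries of $Q$ are $Q_{1,-1} = Q_{-1,1} = 1$ and $Q_{-1,-2} = Q_{-2,-1} = -1$; feeding these into Eq.~(\ref{e-BPolynomial}) gives Eq.~(\ref{e-LPolynomial}), and the closed form for $G_L(\mathbf{q}_k)$ comes from the simplification $P_L(z,z) = 2(1 - z^{-3})$ together with (a). For (c), I would set $\mathbf{y} = \mathbf{e} = \sqrt{N}\,\mathbf{q}_0$ in part (a): this gives $A\mathbf{q}_k = \mathcal{B}(\mathbf{q}_k,\mathbf{e}) = P_\mathcal{B}(\omega_N^k,1)\,\mathbf{q}_k$, so the eigenvalues of the circulant matrix $A$ are the numbers $P_\mathcal{B}(\omega_N^k,1)$, $0 \le k < N$. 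On the other hand they are also $p_A(\omega_N^k)$ by the circulant diagonalization recalled in Appendix~A. Hence the two Laurent polynomials $z \mapsto p_A(z)$ and $z \mapsto P_\mathcal{B}(z,1)$ agree at the $N$ distinct points $\omega_N^0,\dots,\omega_N^{N-1}$; since each has exponents confined to $\{\,r : -N/2 < r \le N/2\,\}$, multiplying their difference by a suitable power of $z$ turns it into an ordinary polynomial of degree at most $N-1$ with $N$ distinct roots, so the difference vanishes identically.

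Finally, for (d), write $\mathcal{F}_N\mathbf{y} = \sum_j y_j\mathbf{q}_j$, expand $G(\mathcal{F}_N\mathbf{y}) = \tfrac12\mathcal{B}(\mathcal{F}_N\mathbf{y},\mathcal{F}_N\mathbf{y})$ by bilinearity and part (a), and then apply $\mathcal{F}_N^\ast$, using that $\mathcal{F}_N^\ast\mathbf{q}_m$ is the $m$-th standard basis vector (modulo $N$) because $\mathcal{F}_N^\ast\mathcal{F}_N = I$; relabeling $j' = \ell - j$ in the resulting double sum produces Eq.~(\ref{e-FourierQuadratic2}), with $y_{\ell-j}$ read $N$-periodically. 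Most of this is routine substitution; the only step needing an idea beyond that is the extension argument in (c) — the observation that two Laurent polynomials whose exponent spread is less than $N$ and which agree on all $N$-th roots of unity must be equal — while everything else is careful tracking of the $\omega_N$ powers and the $1/\sqrt{N}$ normalization factors.
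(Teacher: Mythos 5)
Your proposal is correct and is exactly the ``direct computation'' that the paper asserts but does not write out: the component formula $\mathcal{B}(\mathbf{x},\mathbf{y})_m=\sum_{r,s}x_{r+m}Q_{rs}y_{s+m}$ obtained from equivariance, the extraction of the common factor $\omega_N^{(k+\ell)m}$, the specialization $\mathbf{e}=\sqrt{N}\,\mathbf{q}_0$ for (c), and the bilinear expansion for (d) all check out, including the normalization factors. The only step going beyond routine substitution, your interpolation argument identifying $p_A(z)$ with $P_\mathcal{B}(z,1)$ from their agreement on the $N$-th roots of unity, is valid because both Laurent polynomials have exponents confined to a window of length $N$.
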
 
These formulae can be proved by direct computation. 

Finally it should be noted that if $N$ is a multiple of 3 and $k$ is a multiple of $N/3$, then $G_L(\mathbf{q}_k) = 0$, since then $\omega_N^{-3k} = 1$. More generally it can be shown that $P_L(z,w) = 0$ on the torus where $|z| = |w| = 1$ if and only if $z$ and $w$ are of the form $e^{\pm 2 \pi i/3}$.  

\section*{Appendix B: Numerical Considerations}

The papers \cite{Lorenz96} and \cite{LorenzEmanuel98} used the classical fourth order Runge-Kutta scheme (RK4) with fixed temporal step width $\Delta t = 0.05$ for the numerical solution of Eq.~(\ref{e-lorenz-1}). While this method is straightforward to implement and makes all computations replicable, it only has mid-level accuracy.

\begin{wrapfigure}{r}{0.45\textwidth}
    \centering
    \vspace{-0.75cm}
    \includegraphics[width = 0.43\textwidth]{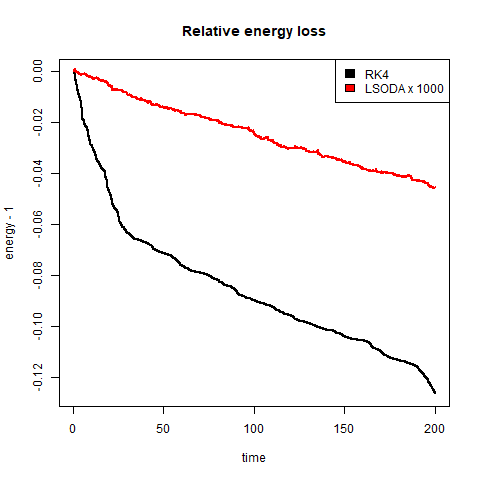}
    \setlength{\abovecaptionskip}{-0.25cm}
    \setlength{\belowcaptionskip}{-0.25cm}
    \caption{Relative energy loss for RK4 and \texttt{lsoda} (multiplied by $10^3$), for $N=36$, $\Delta t = 0.05$, and initial energy 400.}
    \label{f-energy}
\end{wrapfigure}

In this paper, we use a high accuracy ODE solver with variable step width instead, described in the article \cite{lsoda1997} and implemented as \texttt{lsoda} in the \texttt{R} package \texttt{deSolve} (see \cite{deSolve}). To test the accuracy of the solver, we solved the advection-only version Eq.~(\ref{e-lorenz-general-adv}) with the L96 advection term $G_L$.  Then the energy $\sum_i x_i^2$ is exactly conserved.  By rescaling, if $t \mapsto \mathbf{x}(t)$ is a solution, then so is $t \mapsto \lambda \mathbf{x}(\lambda t)$; that is, for larger energy value the system has essentially the same trajectories on a faster time scale. The performance of the RK4 solver with a fixed time step can then be expected to  deteriorate, while the performance of the \texttt{lsoda} solver should be less affected, since it has  automatic stepsize control and is designed to handle stiff equations. 

As expected, for both solvers the numerical energy is not exactly constant. For small values of the energy, e.g. near 1, both solvers nearly conserve energy for long time intervals.  If the energy of the initial value becomes larger, solutions lose energy for both solvers, but solutions found with LSODA lose energy at a much slower rate. The papers \cite{abramov2004, abramov2007, abramov2008, gallavotti2014} use a variety of numerical solvers and emphasize the need to use high order adaptive methods, as measured e.g. by observed energy loss for inviscid systems. 

As an example, Fig.~\ref{f-energy} plots the energy of numerical solutions with initial data $x_j(0) = c(1 + \sin 2 \pi \frac{j}{N})$ for $ 0 \le j < N = 36$, where $c$ is chosen such that the energy equals 400 at $t = 0$. The solution computed with \texttt{lsoda} loses about $2 \cdot 10^{-5} \%$ of its energy per unit time  while the solution from RK4 with $\Delta t = 0.05$ loses around $4 \cdot 10^{-2} \%$ of the energy per unit time.

\section*{Appendix C: Nondissipative Systems}

In this section we consider versions of Eq.~(\ref{e-lorenz-general}) that contain only advection terms; that is,
\begin{equation}
\dot{\mathbf{x}} = G(\mathbf{x})
\label{e-lorenz-general-adv}
\end{equation}
where $G$ is a $\mathscr{G}$-map.  The system~(\ref{e-lorenz-general-adv}) appears as a formal limit for $F \to \infty$, if $t$ and $\mathbf{x}$ are rescaled as $\tau = F^\gamma t, \, \mathbf{y}(\tau) = F^{-\gamma} \mathbf{x}(\tau)$ with $\gamma > \tfrac12$. For $\gamma = \tfrac23$ this rescaling was proposed in \cite{lorenz2005}. The limiting behavior as $F$ becomes large was studied systematically in \cite{gallavotti2014}. Rescaling results in the system
\begin{equation}
\frac{d}{d\tau} \mathbf{y} = G(\mathbf{y}) - F^{-\gamma} \mathbf{y} + F^{1 - 2 \gamma}  \, .
\label{e-lorenz-general-scaled}
\end{equation}
The system~(\ref{e-lorenz-general-adv}) resulting from sending $F \to \infty$  is also known as \textbf{inviscid Lorenz 96} system and was studied in \cite{abramov2004}. In \cite{orszag1980}, the inviscid system~(\ref{e-lorenz-general-adv}) was introduced for the symmetric case $G = G_3 + \tilde G_3$. Another rescaling was proposed in \cite{blender2013}, with the goal of deriving and investigating a continuous long wave approximation, which is effectively a quasilinear second order partial differential equation without dissipation and with conserved energy in the case of zero forcing.       

Here are some observations for the cases $G = G_3$ (the original Lorenz system) and the symmetric version $G = G_3 - \tilde G_3$. 

If $G = G_3$ or $G = G_3 - \tilde G_3$ and if  the number of sites $N$ is a multiple of 3, initial data that are periodic with period 3 result in constant solutions. If $G = G_3 - \tilde G_3$ and the number of sites is even, initial data that are periodic with period 2 also result in constant solutions. For $G = G_3 - \tilde G_3$ in the case where $N$ is a multiple of 6, there is therefore a four-dimensional variety of initial data with period 6 that result in constant solutions. 

In the case $G = G_3 - \tilde G_3$, the sum over all sites $\sum_i x_i(t)$ is constant along solutions. If $N$ is even, then the energy $\sum_{j} x_{2j}(t)^2$ at the even-numbered sites and the energy at the odd-numbered sites are conserved separately. If $N$ is a multiple of 3, then the sums over the sites that are three spaces apart, i.e. $\sum_j x_{3j+k}(t)$  for $k = 0, \,1, \, 2$ are separately conserved. 

For the case $G = G_3 - \tilde G_3$, if $N = 4$, the system is Hamiltonian. In this case there are the two conserved quantities $x_0(t)^2 + x_2(t)^2 = \rho_0^2$ and  
$x_1(t)^2 + x_3(t)^2 = \rho_1^2$.  Introducing polar coordinates $x_0(t) + i \, x_2(t) = \rho_0 e^{i \cdot \alpha_0(t)}$ and $x_1(t) + i \, x_3(t) = \rho_1 e^{i \cdot \alpha_1(t)}$, one obtains after some algebra the system 
\begin{eqnarray}
\alpha_0'(t) &=& -\rho_1 \sqrt{2}
\cos \left( \alpha_1(t) + \frac{\pi}{4} \right)\\
\alpha_1'(t) &=& \rho_0 \sqrt{2}
\cos \left( \alpha_0(t) + \frac{\pi}{4} \right) .
\end{eqnarray}
This is a Hamiltonian system with $H(\alpha_0, \, \alpha_1) = \sqrt{2} \left(\rho_0 \sin (\alpha_0 + \frac{\pi}{4})  + \rho_1  \sin (\alpha_1 + \frac{\pi}{4})\right) $. 

For the case $G = G_3 - \tilde G_3$ and $N = 6$, all solutions may be written in the form
$ x_j(t) = c_j + \frac{1}{2}(-1)^j y_j(t)$ for $j = 0, \, 1, \, 2$ and $x_j(t) = c_{j-3} + \frac{1}{2}(-1)^j y_{j-3}(t)$, for $j = 3, \, 4, \, 5$, 
for suitable functions $y_0, \, y_1, \, y_2$ and constants $c_j$. Let $\mathbf{ c} = (c_0, \, c_1, \, c_2)^{\text{T}}$ and $\mathbf{y} (t) = (y_0(t), \, y_1(t), \, y_2(t))^{\text{T}}$. Then a direct calculation shows that $\mathbf{y}(t)$ satisfies the linear system 
$\dot{\mathbf{y}}(t) = \mathbf{c} \times \mathbf{y}(t)$.

\bigskip

\bigskip
\textit{Both authors contributed to the conception, design, analysis, computation, and writing of this study. This article is based on the first author's undergraduate senior thesis.}


\newpage

\bibliographystyle{plain}
\bibliography{lorenz96}

\end{document}